\documentclass{amsart}

\usepackage{amsfonts, amsmath, amssymb, amsthm}
\usepackage{mathrsfs}
\usepackage{tikz-cd}

\newcommand{\inj}{\mathrm{inj}}

\newcommand{\vol}{\mathrm{vol}}

\newcommand{\st}{\,\big|\,}
\newcommand{\real}{\mathbb{R}}

\newcommand{\integer}{\mathbb{Z}}
\newcommand{\tor}{\mathbb{T}}

\newtheorem{theorem}{Theorem}[section]
\newtheorem{lemma}[theorem]{Lemma}
\newtheorem{corollary}[theorem]{Corollary}
\newtheorem{proposition}[theorem]{Proposition}
\newtheorem{remark}[theorem]{Remark}
\newtheorem{question}[theorem]{Question}

\numberwithin{equation}{section}

\begin{document}

\title{CENTRAL SPLITTING OF MANIFOLDS WITH NO CONJUGATE POINTS}

\author[JAMES DIBBLE]{JAMES DIBBLE}
\address{Department of Mathematics, University of Iowa, 14 MacLean Hall, Iowa City, IA 52242}
\email{james-dibble@uiowa.edu}

\subjclass[2010]{Primary 53C20 and 53C24; Secondary 53C22}

\date{}

\begin{abstract}
    Each compact Riemannian manifold with no conjugate points admits a family of functions whose integrals vanish exactly when central Busemann functions split linearly. These functions vanish when all central Busemann functions are sub- or superharmonic. When central Busemann functions are convex or concave, they must be totally geodesic. These yield generalizations of the splitting theorems of O'Sullivan and Eberlein for manifolds with no focal points and, respectively, nonpositive curvature.
\end{abstract}

\maketitle

\section{Introduction}

A key insight in Burago--Ivanov's proof that each Riemannian torus with no conjugate points is flat is that the asymptotic norm of its fundamental group $\pi_1(\tor^k) \cong \integer^k$ is Riemannian, in the sense that it is generated by an inner product \cite{BuragoIvanov1994}. In the case of an arbitrary compact Riemannian manifold $N$ whose fundamental group has center $Z(\pi_1(N))$ of rank $k$, O'Sullivan \cite{O'Sullivan1974} showed that, when $N$ has no focal points, it must be foliated by totally geodesic and flat $k$-dimensional toruses and, moreover, be covered by an isometric product with a flat $\tor^k$. This generalized a theorem of Wolf \cite{Wolf1964} about manifolds with nonpositive sectional curvature. It was later shown by Eberlein \cite{Eberlein1982} that each compact manifold with nonpositive sectional curvature and $Z(\pi_1(N))$ of rank $k$ is finitely covered by a diffeomorphic product with $\tor^k$. The aim of this paper is to explore conditions, between having no conjugate points and no focal points, that allow for variations on these results.

Associated to each $z_0,z_1 \in Z(\pi_1(N))$ is a function $F_{z_0 z_1} : N \to \real$ defined by \eqref{busemann laplacian integral}; the linear splitting of central Busemann functions on $\hat{N}$, where $\pi : \hat{N} \to N$ is the universal covering map, is one of a number of conditions equivalent to the vanishing of all $\int_N F_{z_0 z_1} \, d\vol_N$.

\begin{theorem}\label{linear splitting for subgroups}
    Let $N$ be a compact Riemannian manifold with no conjugate points and $Z$ a subgroup of $Z(\pi_1(N))$. Then the following are equivalent:\\
    (i) $\int_N F_{z_0 z_1} \, d\vol_N = 0$ for all $z_0,z_1 \in Z$;\\
    (ii) $B(z_0,z_1) = \frac{1}{\vol(N)} \int_N h(\omega_{z_0},\omega_{z_1}) \, d\vol_N$ for all $z_0,z_1 \in Z$;\\
    (iii) $h(\omega_{z_0}(x),\omega_{z_1}(x)) = B(z_0,z_1)$ for all $x \in N$ and all $z_0,z_1 \in Z$;\\
    (iv) $\omega_{\sum_i m_i z_i} = \sum_i m_i \omega_{z_i}$ for all $z_i \in Z$ and all $m_i \in \integer$.\\
\end{theorem}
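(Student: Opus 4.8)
The plan is to prove that each of (i)--(iv) is equivalent to a single, more transparent statement: that for every $z \in Z$ the central Busemann function $b_z$ is harmonic on $\hat{N}$, equivalently that the dual one-form of $\omega_z$ is harmonic on $N$. First I would record the analytic backbone. Each $\omega_z$ descends to a continuous vector field on $N$ whose distributional divergence is the push-forward of $\Delta b_z$, so the divergence theorem on the closed manifold $N$ gives $\int_N \mathrm{div}\,\omega_z\,d\vol_N = 0$; in particular $\Delta b_z \equiv 0$ is equivalent to $\Delta b_z$ having a fixed sign, which is what makes the sub-/superharmonic hypotheses bite. Write the Hodge decomposition $\omega_z = \eta_z + \nabla f_z$, with $\eta_z$ harmonic (in particular divergence-free) and $f_z \colon N \to \real$. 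The cross terms vanish, since $\int_N h(\eta_z, \nabla f_w)\,d\vol_N = -\int_N f_w\,\mathrm{div}\,\eta_z\,d\vol_N = 0$. Recalling that the cohomology class determined by $\omega_z$ depends additively on $z$ and that $B$ is the associated harmonic pairing, $B(z_0,z_1) = \frac{1}{\vol(N)}\int_N h(\eta_{z_0},\eta_{z_1})\,d\vol_N$, one obtains the master identity
\[
\int_N h(\omega_{z_0},\omega_{z_1})\,d\vol_N \;=\; \vol(N)\,B(z_0,z_1) \;+\; \int_N h(\nabla f_{z_0},\nabla f_{z_1})\,d\vol_N,
\]
and, after unwinding \eqref{busemann laplacian integral}, the last integral equals $\int_N F_{z_0 z_1}\,d\vol_N$.

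With this in hand the soft equivalences are quick. The master identity gives (i) $\Leftrightarrow$ (ii) immediately, and it shows that (i) holds iff $\int_N h(\nabla f_{z_0},\nabla f_{z_1})\,d\vol_N = 0$ for all $z_0,z_1 \in Z$, which on taking $z_0 = z_1 = z$ is just $\nabla f_z \equiv 0$ for all $z \in Z$, i.e.\ that every central Busemann function is harmonic. Trivially (iii) $\Rightarrow$ (ii), and also (iii) $\Rightarrow$ (i): if $h(\omega_{z_0},\omega_{z_1}) \equiv B(z_0,z_1)$ then the left-hand side of the master identity is $\vol(N)\,B(z_0,z_1)$, forcing $\int_N F_{z_0 z_1}\,d\vol_N = 0$. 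Conversely, suppose every central Busemann function is harmonic, so $\omega_z = \eta_z$. Central Busemann gradients on a manifold with no conjugate points have pointwise-constant norm, so $|\omega_z|^2$ is constant and hence equal to its average, which the master identity identifies as $B(z,z)$; polarizing $|\omega_{z_0+z_1}|^2 = |\omega_{z_0}|^2 + 2h(\omega_{z_0},\omega_{z_1}) + |\omega_{z_1}|^2$ and using that $B$ is bilinear yields $h(\omega_{z_0},\omega_{z_1}) \equiv B(z_0,z_1)$, which is (iii). Finally (iii) $\Rightarrow$ (iv): expanding $\left|\omega_{\sum_i m_i z_i} - \sum_i m_i \omega_{z_i}\right|^2$, substituting (iii) for each inner product and using bilinearity of $B$, the expression collapses to $0$, so the two vector fields coincide.

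It remains to close the loop by proving (iv) $\Rightarrow$ harmonicity of the $b_z$, and this is the step I expect to be the main obstacle. Under (iv) the map $z \mapsto \omega_z$ is $\integer$-linear on $Z$, so for any $z_1,\dots,z_k \in Z$ the function $\left|\omega_{\sum_i m_i z_i}\right|^2 = \sum_{i,j} m_i m_j\, h(\omega_{z_i},\omega_{z_j})$ is, as a function of the integers $m_i$, a fixed quadratic form whose coefficients are the functions $h(\omega_{z_i},\omega_{z_j})$; since it is also the squared norm of a central Busemann gradient, hence pointwise constant on $N$, varying the $m_i$ forces each $h(\omega_{z_i},\omega_{z_j})$ to be constant. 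Thus $z \mapsto |\omega_z|^2$ restricts to a genuine constant-coefficient quadratic form on $Z$ — the asymptotic norm is Euclidean there. What is left is to identify this form with $B$, equivalently to show $\nabla f_z \equiv 0$; I would do this by comparing the asymptotic norm on $Z$ (realized by $|\omega_z|$) with the Albanese/harmonic norm $B(\cdot,\cdot)^{1/2}$ and feeding the comparison back into the master identity so as to force the $L^2$-pairings of the exact parts $\nabla f_z$ to vanish. It is precisely here that no conjugate points — in place of the stronger no-focal-points convexity exploited by O'Sullivan and Eberlein — must be used essentially, in the spirit of Burago--Ivanov's argument for the torus; converting the linear-algebraic rigidity of (iv) into the pointwise vanishing of $\nabla f_z$ is the crux of the whole proof.
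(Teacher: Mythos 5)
Your proposal is built around the identity $B(z_0,z_1) = \frac{1}{\vol(N)}\int_N h(\eta_{z_0},\eta_{z_1})\,d\vol_N$, where $\eta_z$ is the harmonic part in the Hodge decomposition $\omega_z^\flat = \eta_z + df_z$, and around the consequent identification $\int_N F_{z_0 z_1}\,d\vol_N = \int_N h(\nabla f_{z_0},\nabla f_{z_1})\,d\vol_N$. Neither is justified, and on the diagonal both are \emph{equivalent} to the harmonicity you are trying to deduce: since $\omega_z$ has constant length $\|z\|_\infty$ and $B(z,z)=\|z\|_\infty^2$, one always has $\vol(N)B(z,z)=\int_N\|\omega_z\|_h^2$, while $\frac{1}{\vol(N)}\int_N\|\eta_z\|^2 = B(z,z) - \frac{1}{\vol(N)}\int_N\|\nabla f_z\|^2$; so your ``master identity'' with $z_0=z_1=z$ already asserts $\nabla f_z\equiv 0$. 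In fact the paper's Green's-identity computation (Theorem \ref{green's identity integral}) gives $\int_N F_{zz}\,d\vol_N = \vol(N)B(z,z)-\int_N\|\omega_z\|_h^2 = 0$ for \emph{every} central $z$, whereas $\int_N\|\nabla f_z\|^2$ vanishes only when $\omega_z^\flat$ is harmonic; so the two quantities cannot agree in general. The harmonic representative minimizes the $L^2$-norm in its class, so what is true is only an inequality, with equality exactly in the harmonic case. Moreover, the overall target of your scheme -- that (i)--(iv) are each equivalent to harmonicity of the central Busemann functions of $Z$ -- cannot be right: statement (iv) holds automatically for every cyclic subgroup (by uniqueness, $\omega_{mz}=m\omega_z$), so your equivalence would prove that every central Busemann function on every compact manifold with no conjugate points is harmonic, which is far stronger than anything claimed in the paper and would render its Corollary 1.3, Lemma \ref{central busemann functions are harmonic}, and Question \ref{question2} vacuous. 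The paper uses harmonicity only as a \emph{sufficient} condition for (i). There are also two internal circularities: your step ``harmonic $\Rightarrow$ (iii)'' polarizes $\|\omega_{z_0+z_1}\|^2 = \|\omega_{z_0}\|^2 + 2h(\omega_{z_0},\omega_{z_1}) + \|\omega_{z_1}\|^2$, which presupposes $\omega_{z_0+z_1}=\omega_{z_0}+\omega_{z_1}$, i.e.\ (iv); and the implication ``(iv) $\Rightarrow$ harmonic'', which you flag as the crux, is left unproved (constancy of the inner products $h(\omega_{z_i},\omega_{z_j})$ says nothing about the trace of $\nabla\omega_z$ in the directions orthogonal to the span of the $\omega_{w_i}$).

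For comparison, the paper obtains (i)$\Leftrightarrow$(ii) not from Hodge theory but from the identity $B(z_0,z_1)=\frac{1}{\vol(N)}\int_N[h(\omega_{z_0},\omega_{z_1})+F_{z_0z_1}]\,d\vol_N$, proved by applying Green's identity over fundamental domains built from horospheres (Lemmas \ref{primitive elements} and \ref{fundamental domain}); (iv)$\Rightarrow$(iii) is Lemma \ref{constant inner products}, and (iii)$\Rightarrow$(ii) is immediate. The delicate implication is (i)$\Rightarrow$(iv), and the paper's mechanism is the second integral formulation of $B$ (Theorem \ref{integrate with respect to g}, pulling back $\omega_{z_1}^\flat$ along a map $f:\tor^k\to N$), which produces a bilinear form $G$ depending only on the direction at infinity; under (i) it coincides with $H(\zeta_1,\zeta_2)=\frac{1}{\vol(N)}\int_N h(\zeta_1,\zeta_2)$, and then $\zeta=\omega_{\sum m_iz_i}-\sum m_i\omega_{z_i}$ has vanishing $G$-norm because $\mathscr{D}(\zeta)=0$, hence vanishing $H$-norm, hence $\zeta\equiv 0$. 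One genuinely nice fragment of your proposal survives: given first-slot additivity of $B$ (Ivanov--Kapovitch) and the symmetry that (iii) provides, your pointwise polarization argument for (iii)$\Rightarrow$(iv) is correct and shorter than routing through (i); but the analytic core of the theorem cannot be replaced by the asserted harmonic-pairing identity.
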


\noindent The notation that appears in Theorem \ref{linear splitting for subgroups} is defined in the next two sections. The proof is partly an application of Green's identity.

It follows from the main result of \cite{DibbleToAppear} that any subgroup $Z$ of $Z(\pi_1(N))$ is isomorphic to $\integer^k$ for some $0 \leq k \leq \dim(N)$. When statements (i)-(iv) in Theorem \ref{linear splitting for subgroups} hold, one may define two integral formulations of an inner product that generates the asymptotic norm of $Z$. Moreover, a number of topological properties, which are known in the case of no focal points, hold (cf. \cite{O'Sullivan1974}).

\begin{theorem}\label{splitting for subgroups when integrals vanish}
    Let $N$ be a compact Riemannian manifold with no conjugate points and $Z$ a subgroup of $Z(\pi_1(N))$ for which statements (i)-(iv) in Theorem \ref{linear splitting for subgroups} hold. Then so do the following:\\
    (a) The asymptotic norm $\|\cdot\|_\infty$ of $Z$ with respect to any isomorphism $Z \to \integer^k$ is Riemannian;\\
    (b) If $w_1,\ldots,w_k$ generate $Z$ and $H_1,\ldots,H_k$ are corresponding horospheres, then $\hat{H} = \cap_{i=1}^k H_i$ is a simply connected submanifold of $\hat{N}$;\\
    (c) $\hat{N}$ is diffeomorphic to $\hat{H} \times \real^k$;\\
    (d) There exists a sequence of normal covering maps
    \[
        \hat{H} \times \real^k \xrightarrow{\psi_0} N_0 \times \tor^k \xrightarrow{\phi_0} N\textrm{,}
    \]
    with respective deck transformation groups $\pi_1(N_0) \times Z$ and $\Gamma$, such that $\psi_0$ is a product map, $N_0$ is orientable, $\pi_1(N_0)$ is a normal subgroup of $\pi_1(N)$ containing the commutator subgroup $[\pi_1(N), \pi_1(N)]$, and the sequences
    \[
        0 \to \pi_1(N_0) \times Z \to \pi_1(N) \to \Gamma \to 0
    \]
    and
    \[
        0 \to (\pi_1(N_0)/[\pi_1(N),\pi_1(N)]) \times Z \to H_1(N,\integer) \to \Gamma \to 0
    \]
    are exact.
\end{theorem}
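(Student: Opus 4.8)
The plan is to follow O'Sullivan's treatment of the no-focal-points case in \cite{O'Sullivan1974}, feeding in the equivalent conditions of Theorem \ref{linear splitting for subgroups} wherever the no-focal-points hypothesis is used there. Fix generators $w_1,\dots,w_k$ of $Z\cong\integer^k$, their central Busemann functions $b_{w_1},\dots,b_{w_k}$ on $\hat N$, and the associated fields $\omega_{w_1},\dots,\omega_{w_k}$. Since each $w_i$ is central, $b_{w_i}$ is $\pi_1(N)$-invariant up to an additive constant, so $\omega_{w_i}$ is $\pi_1(N)$-invariant and descends to a vector field on $N$, the $1$-form $h(\omega_{w_i},\cdot)$ descends to a closed $1$-form on $N$, and recording the additive constants yields a period homomorphism $p\colon\pi_1(N)\to\real^k$, $p(\gamma)_i=b_{w_i}(x)-b_{w_i}(\gamma x)$, which is independent of $x$ and kills $[\pi_1(N),\pi_1(N)]$. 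I will use freely that, under the standing hypotheses, the central Busemann functions split linearly on $\hat N$ --- in particular that the fields $\omega_{w_i}$ have mutually commuting, complete flows $\phi^1,\dots,\phi^k$ --- a consequence of conditions (iii)--(iv) together with the structural results of \cite{DibbleToAppear}.

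Part (a) then follows: by (iv) the map $z\mapsto\omega_z$ is $\integer$-linear, and by (iii) one has $B(z_0,z_1)=h(\omega_{z_0}(x),\omega_{z_1}(x))$ for all $x$, so $B$ is a bilinear form on $Z$, positive definite because $B(z,z)=|\omega_z|^2>0$ for $z\ne0$; hence $B$ extends to an inner product on $Z\otimes\real\cong\real^k$, and the norm it induces is the asymptotic norm, since $\|w\|_\infty$ is realized as displacement along the flow of $\omega_w$ and $z\mapsto\omega_z$ is linear. For (b) and (c), set $\hat H=\bigcap_{i=1}^k H_i$, the common zero set of $b_{w_1},\dots,b_{w_k}$; the differentials $db_{w_i}$ are pointwise linearly independent --- their Gram matrix is the matrix $(B(w_i,w_j))$, nondegenerate by part (a) --- so $\hat H$ is an embedded submanifold. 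Because $\phi^1,\dots,\phi^k$ are complete and commute, the map $\Phi\colon\hat H\times\real^k\to\hat N$, $\Phi(q,t_1,\dots,t_k)=\phi^1_{t_1}\circ\cdots\circ\phi^k_{t_k}(q)$, is a local diffeomorphism, and it is a bijection with smooth inverse --- one records the level $(b_{w_1}(x),\dots,b_{w_k}(x))$ and flows $x$ back to $\hat H$ --- so $\hat N$ is diffeomorphic to $\hat H\times\real^k$, which is (c); since $\hat N$ is simply connected, so is the factor $\hat H$, which is (b).

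For part (d), the key geometric input is that $Z$ acts on $\hat N\cong\hat H\times\real^k$ purely by translations of the Euclidean factor, $z\cdot(q,t)=(q,t+p(z))$. Granting this, $Z\cap\ker p=0$ --- if $p(z)=0$ then $z$ acts trivially, so $z=e$ --- and $p(Z)\cong\integer^k$ is a lattice in $\real^k$, since the $Z$-action, being properly discontinuous, is so on the Euclidean factor. Put $\pi_1(N_0):=\ker p$, intersected if necessary with the kernel of the orientation character of the $\pi_1(N)$-action on $\hat H$ so that $N_0$ will be orientable; this keeps $\pi_1(N_0)$ normal in $\pi_1(N)$ and --- since $p$ and the orientation character both kill commutators --- still containing $[\pi_1(N),\pi_1(N)]$. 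Then $\pi_1(N_0)$ is normal in $\pi_1(N)$, $Z$ is central, $\pi_1(N_0)\cap Z=0$, and one forms $N_0\times\tor^k:=\hat N/(\pi_1(N_0)\times Z)$ with $N_0=\hat H/\pi_1(N_0)$ and $\tor^k=\real^k/p(Z)$; because $\pi_1(N_0)$ acts trivially on $\real^k$ and $Z$ acts trivially on $\hat H$, the covering $\psi_0$ is the product of the two factor quotients, and $\phi_0\colon N_0\times\tor^k\to N$ is the normal covering with deck group $\Gamma:=\pi_1(N)/(\pi_1(N_0)\times Z)$. The first exact sequence is tautological. The second is its abelianization: $[\pi_1(N),\pi_1(N)]\subseteq\pi_1(N_0)$ makes $\pi_1(N_0)/[\pi_1(N),\pi_1(N)]$ a subgroup of $H_1(N,\integer)$, $Z\cap[\pi_1(N),\pi_1(N)]=0$ (from $Z\cap\ker p=0$) makes $Z$ embed into $H_1(N,\integer)$, and a direct check shows these two subgroups meet trivially and together equal the kernel of $H_1(N,\integer)\to\Gamma$.

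I expect the main obstacle to be exactly the geometric input invoked in (d): that the central deck transformations translate the Euclidean factor of $\hat N$ rather than shearing it, and, more broadly, that the product structure on $\hat N$ from (c) is compatible with the full $\pi_1(N)$-action in the manner needed for $\psi_0$ to be a genuine product map and for $N_0=\hat H/\pi_1(N_0)$ to make sense. For manifolds with no focal points this rests on the convexity of Busemann functions and the ensuing structure of horospheres; here it has to be extracted from the linearity statement (iv) of Theorem \ref{linear splitting for subgroups} together with \cite{DibbleToAppear}. A lesser but genuine nuisance, owing to the merely $C^{1,1}$ regularity of Busemann functions under the no-conjugate-points hypothesis, is the care required to justify the flows $\phi^i$ and the smoothness of $\hat H$ and of $\Phi^{-1}$; the linear splitting guaranteed by the hypotheses should make the relevant objects smooth and dispose of this.
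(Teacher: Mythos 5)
Your overall strategy mirrors the paper's (an O'Sullivan-style argument fed by conditions (i)--(iv)), and parts (a)--(c) are essentially sound: your direct derivation of bilinearity and symmetry of $B$ from (iii)--(iv) substitutes for the paper's Lemma \ref{asymptotic norm is riemannian}, and your explicit inverse for $\Phi$ (recover $t$ from the affine Busemann levels $b_{w_j}\circ\Phi(q,t)=\sum_i t_i B(w_i,w_j)$, then flow back) substitutes for the paper's route via properness of $\Psi$ and Hadamard's global inverse function theorem; connectedness of $\cap_i H_i$ (the paper's Lemma \ref{intersection of two horospheres is connected}) then falls out of the diffeomorphism rather than being proved first. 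But there is a genuine gap, and it sits exactly where you flag it: in (d) you take for granted ("granting this") that $Z$ translates the $\real^k$-factor and, implicitly, that $\ker p$ acts only on the $\hat H$-factor --- which is precisely what makes $\psi_0$ a product map and $N_0=\hat H/\pi_1(N_0)$ meaningful --- and you do not derive it from (i)--(iv). This is the crux of (d), not a routine compatibility check. The paper closes it by following O'Sullivan: by Lemma \ref{constant change in busemann functions} every $g\in\pi_1(N)$ changes each central Busemann function by a constant, so $G_0=\{g\st g(\hat x)\in\hat H\}$ preserves $\hat H$ and acts freely and properly discontinuously on it, and the group generated by its orientation-preserving part and $Z$ splits as a direct product, giving the product quotient. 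In your coordinates the missing step can be supplied as follows: each $\hat\omega_{w_i}$ is the lift of a field on $N$, so its flow $\phi^i$ commutes with every deck transformation, and the integer-time map $\phi^i_{m}$ \emph{is} the deck transformation $w_i^{m}$; inserting this into your formula for $\Phi$ shows that $z=\sum_i m_i w_i$ acts as $(q,t)\mapsto(q,t+D(z))$ while elements of $\ker p$ act as $(q,t)\mapsto(g(q),t)$. Without an argument of this kind, (d) is unproven as written.

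A second, related overreach: you announce as a standing fact that the flows $\phi^1,\dots,\phi^k$ mutually commute, attributing this to (iii)--(iv) and \cite{DibbleToAppear}. The paper establishes only $[\omega_{z_0},\omega_{z_1}]=2\nabla_{\omega_{z_0}}\omega_{z_1}$ (Lemma \ref{constant inner products}(a)), which is not known to vanish under (i)--(iv); vanishing is what one gets in the convex/concave setting of Theorem \ref{convex splitting for subgroups}, where the fields become parallel. Fortunately your constructions never actually need commutativity: $\Phi$ composes the flows in a fixed order, your inverse undoes them in the reverse order, and the level computation uses only (iii); likewise the fix for (d) above uses only deck-invariance of the fields, not $[\omega_{w_i},\omega_{w_j}]=0$. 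So either delete that claim or prove it; as stated it is unsupported, and if any step had genuinely leaned on it, that step would be in doubt.
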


\noindent Because $[\pi_1(N),\pi_1(N)] \subseteq \pi_1(N_0)$, the covering map $\phi_0$ is Abelian.

It is clear from \eqref{busemann laplacian integral} that the integrals $\int_N F_{z_0 z_1} \, d\vol_N$ vanish when central Busemann functions are harmonic. Lemma \ref{central busemann functions are harmonic} states that sub- or superharmonic central Busemann functions must be harmonic, which implies the following.

\begin{corollary}
    Let $N$ be a compact Riemannian manifold with no conjugate points and $Z$ a subgroup of $Z(\pi_1(N))$. If each Busemann function associated with $Z$ is sub- or superharmonic, then statements (i)-(iv) in Theorem \ref{linear splitting for subgroups} and the conclusions of Theorem \ref{splitting for subgroups when integrals vanish} hold.
\end{corollary}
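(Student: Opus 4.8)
The plan is to obtain the corollary by a direct chaining of Lemma \ref{central busemann functions are harmonic} with Theorems \ref{linear splitting for subgroups} and \ref{splitting for subgroups when integrals vanish}, so that essentially no new computation is required. First I would unwind the definitions so that the hypothesis --- that every Busemann function associated with $Z$ is sub- or superharmonic --- is seen to apply to exactly the central Busemann functions on $\hat{N}$ whose Laplacians enter the integrand $F_{z_0 z_1}$ in \eqref{busemann laplacian integral}, for all $z_0,z_1 \in Z$. Since $\pi : \hat{N} \to N$ is a Riemannian covering, hence a local isometry, sub-, super-, and harmonicity are purely local conditions that transfer without loss between $N$ and $\hat{N}$; this lets me pass freely between the function named in the statement and the genuine Busemann function on $\hat{N}$.

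Next, for each such Busemann function $b$, Lemma \ref{central busemann functions are harmonic} upgrades the hypothesis ``sub- or superharmonic'' to ``$b$ is harmonic,'' i.e.\ $\Delta b \equiv 0$. As observed immediately before the corollary, it is then clear from the defining formula \eqref{busemann laplacian integral} that if all central Busemann functions associated with $Z$ are harmonic, then $\int_N F_{z_0 z_1} \, d\vol_N = 0$ for every $z_0,z_1 \in Z$; that is, statement (i) of Theorem \ref{linear splitting for subgroups} holds. By the equivalence asserted in that theorem, statements (ii)--(iv) hold as well, and then conclusions (a)--(d) are precisely the content of Theorem \ref{splitting for subgroups when integrals vanish} applied to $Z$.

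The only point requiring genuine care is the first step: verifying that the ``Busemann functions associated with $Z$'' named in the hypothesis are exactly those whose second-order behavior controls $F_{z_0 z_1}$, and that it suffices to know each is \emph{either} sub- \emph{or} superharmonic rather than all of one type. The latter causes no trouble, because Lemma \ref{central busemann functions are harmonic} treats the two cases separately and delivers harmonicity in either, so the conclusion is insensitive to which alternative holds for which function. Everything after that is a formal invocation of the already-established results, and I would expect the write-up to be only a few lines.
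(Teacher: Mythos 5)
Your proposal is correct and follows the paper's own (implicit) argument exactly: Lemma \ref{central busemann functions are harmonic} upgrades sub- or superharmonicity to harmonicity, so $\Delta b_{v_0}\equiv 0$ makes $F_{z_0 z_1}$ vanish by \eqref{busemann laplacian integral}, giving statement (i), and Theorems \ref{linear splitting for subgroups} and \ref{splitting for subgroups when integrals vanish} supply the rest. No gaps; the write-up would indeed be only a few lines.
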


\noindent Since they must be harmonic, concave or convex central Busemann functions must have vanishing Hessian and therefore be totally geodesic, in the sense that they map geodesics to geodesics. It is well known that Busemann functions are convex when $N$ has no focal points or, more narrowly, nonpositive sectional curvature \cite{Eschenburg1977}. Thus the following generalizes the splitting theorems of O'Sullivan and Eberlein.

\begin{theorem}\label{convex splitting for subgroups}
    Let $N$ be a compact Riemannian manifold with no conjugate points and $Z$ a subgroup of $Z(\pi_1(N))$. If each Busemann function associated with $Z$ is convex or concave, then, in addition to the conclusions of Theorem \ref{splitting for subgroups when integrals vanish}, the following hold:\\
    (a) $\hat{N}$ is isometric to $\hat{H} \times \real^k$, where $Z$ acts on each $\real^k$-fiber by translations;\\
    (b) $N$ is foliated by totally geodesic and flat $k$-dimensional toruses;\\
    (c) There exists a sequence of Riemannian covering maps
    \[
        \hat{H} \times \real^k \xrightarrow{\psi_1} N_1 \times \tor^k \xrightarrow{\phi_1} N\textrm{,}
    \]
    where $N_1$ is orientable, $\psi_1$ is a product, $\phi_1$ has finitely many sheets, and each map restricts on each $\real^k$- or $\tor^k$-fiber to a totally geodesic and locally isometric immersion onto a leaf of the $\tor^k$-foliation below.
\end{theorem}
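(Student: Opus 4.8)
The plan is to first deduce from the hypothesis that the Busemann functions attached to $Z$ are affine, then build two complementary parallel distributions on $\hat N$, and finally invoke de Rham's splitting theorem while tracking the deck actions. A convex function is subharmonic and a concave function superharmonic, so by Lemma \ref{central busemann functions are harmonic} every Busemann function $b$ associated with an element of $Z$ is harmonic; since $\nabla^2 b$ is then a trace-free semidefinite symmetric form it vanishes identically, so $b$ is affine along geodesics and $\grad b$ is a nowhere-zero parallel vector field on $\hat N$. Fix generators $w_1,\dots,w_k$ of $Z$, write $b_i$ for the Busemann function of $w_i$ and $X_i := \grad b_i$; each $X_i$ is parallel and the $X_i$ commute. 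They are pointwise linearly independent: a nontrivial relation $\sum_i m_i X_i = 0$ at one point would, by parallelism, hold everywhere, so $\sum_i m_i b_i$ would be constant, and by statement (iv) of Theorem \ref{linear splitting for subgroups} this is the Busemann function associated with $\sum_i m_i w_i$, which would then have vanishing asymptotic norm and, $\pi_1(N)$ acting freely, be trivial. Hence $\mathcal V := \operatorname{span}(X_1,\dots,X_k)$ and $\mathcal H := \mathcal V^\perp = \bigcap_{i=1}^k \ker db_i$ are complementary parallel distributions; both are integrable with totally geodesic leaves, the leaves of $\mathcal V$ are flat (framed by commuting parallel fields and complete, being totally geodesic in $\hat N$), and by Theorem \ref{splitting for subgroups when integrals vanish}(b) the leaf of $\mathcal H$ through a suitable basepoint is the simply connected submanifold $\hat H$.

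\textbf{The isometric splitting, part (a).}
Because $\hat N$ is complete and simply connected, de Rham's decomposition theorem applied to the parallel distributions $\mathcal H$ and $\mathcal V$ produces an isometry $\hat N \cong \hat H \times \real^k$ whose $\real^k$-factors are the complete flat leaves of $\mathcal V$; being built from the same Busemann functions, it refines the diffeomorphism of Theorem \ref{splitting for subgroups when integrals vanish}(c), and the constant Gram matrix $(\langle X_i,X_j\rangle)$ is the inner product generating the asymptotic norm of Theorem \ref{splitting for subgroups when integrals vanish}(a). Each $g\in\pi_1(N)$ centralizes $Z$, hence fixes the asymptotic class of every $w_i$, so $b_i\circ g - b_i$ is constant and $g_*X_i = X_i$; therefore $g$ preserves the product and acts as $(\bar g,\tau_{\rho(g)})$ with $\bar g\in\operatorname{Isom}(\hat H)$ and $\tau_{\rho(g)}$ a translation of $\real^k$. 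Restricting to $Z$: by Theorem \ref{splitting for subgroups when integrals vanish}(d) the map $\psi_0$ is a product in which $Z$ acts only on $\real^k$ with compact quotient $\tor^k$, so $\rho|_Z$ realizes $Z$ as a full-rank lattice of translations of $\real^k$ and $Z$ acts trivially on $\hat H$. This is (a).

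\textbf{The torus foliation (b) and the covering tower (c).}
The distribution $\mathcal V$ is $\pi_1(N)$-invariant and parallel, hence descends to a parallel, totally geodesic, $k$-dimensional foliation of $N$ with flat leaves; the leaf through a point lifts to a $\real^k$-factor whose $\pi_1(N)$-stabilizer acts by translations (since $g_*X_i = X_i$) and contains the full-rank lattice $\rho(Z)$, so each leaf is a compact flat manifold and, the action being by translations, a flat $k$-torus. This gives (b). For (c), take $N_1 := N_0$ and let $\psi_1,\phi_1$ be the maps of Theorem \ref{splitting for subgroups when integrals vanish}(d): with the isometric product metrics above, $\psi_1$ is a Riemannian product covering, $\phi_1$ is a Riemannian covering, and each restricts on a $\real^k$- or $\tor^k$-fiber to a totally geodesic, locally isometric immersion onto a leaf of the torus foliation. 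Finally, $\phi_1$ has finitely many sheets: since $\psi_0$ is a product, $\pi_1(N_0)\subseteq\ker\rho$, so $\rho$ descends to a homomorphism of the finitely generated group $\pi_1(N)/(\pi_1(N_0)\times Z)$—a quotient of $H_1(N,\integer)/\operatorname{image}(Z)$—into $\real^k/\rho(Z)\cong\tor^k$; because $\pi_1(N)$ acts cocompactly on $\hat N$ its translation image $\rho(\pi_1(N))$ is discrete, so $\rho(Z)$ has finite index in it and the deck group of $\phi_1$ is finite.

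\textbf{Anticipated obstacle.}
The crux is the second paragraph: checking that the de Rham leaf of $\mathcal H$ is precisely the submanifold $\hat H$ of Theorem \ref{splitting for subgroups when integrals vanish}(b), that the new isometric product agrees with the earlier diffeomorphic one, and that the $Z$- and $\pi_1(N)$-actions are exactly (trivial on $\hat H$)$\times$(translation on $\real^k$). The discreteness of $\rho(\pi_1(N))$ behind the finiteness of $\phi_1$ is the other delicate point; unlike in Theorem \ref{splitting for subgroups when integrals vanish}, it genuinely uses the affineness of the central Busemann functions rather than merely the vanishing of the integrals $\int_N F_{z_0z_1}\,d\vol_N$.
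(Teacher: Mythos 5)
Your treatment of parts (a) and (b) is essentially the paper's argument: convexity/concavity plus Lemma \ref{central busemann functions are harmonic} gives harmonic, hence Hessian-free, Busemann functions; the parallel gradient fields $\hat{\omega}_{w_i}$ span a parallel distribution; de Rham splits $\hat{N}$ isometrically as $\hat{H} \times \real^k$; and the distribution descends to the totally geodesic flat torus foliation of $N$. (The linear independence of the $\hat{\omega}_{w_i}$ for \emph{real} coefficients is Lemma \ref{linearly independent}; your argument via statement (iv) only covers integer relations, but this is cosmetic.)

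Part (c), however, has a genuine gap, and it is exactly at the point you flagged. Your choice $N_1 := N_0$, $\phi_1 := \phi_0$ requires the deck group $\Gamma \cong \rho(\pi_1(N))/\rho(Z)$ of $\phi_0$ to be finite, and your justification --- ``because $\pi_1(N)$ acts cocompactly on $\hat{N}$ its translation image $\rho(\pi_1(N))$ is discrete'' --- is false. Cocompactness of the full action does not make the projection to the translation parts of the $\real^k$-factor discrete; this is precisely the irreducible-lattice phenomenon. Indeed, if $\rho(\pi_1(N))$ were discrete, it would be a lattice containing $\rho(Z)$ with finite index, and the finite-index subgroup $\Lambda = \rho^{-1}(\rho(Z))$ would split as $\Lambda_0 \times Z$ with $\Lambda_0 = \mathrm{Ker}\,\rho \cap \Lambda$ acting trivially on the $\real^k$-factor, so that $\hat{N}/\Lambda \cong (\hat{H}/\Lambda_0) \times \tor^k$ would be a finite cover of $N$ splitting as an \emph{isometric} product with a flat torus. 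The examples of \cite{LawsonYau1972}, \cite{Eberlein1980}, \cite{Eberlein1981} cited after Theorem \ref{convex splitting for subgroups} are nonpositively curved --- so all hypotheses here hold --- and admit no such finite cover; in them $\rho(\pi_1(N))$ is not discrete, $\Gamma$ is infinite, and your $\phi_1$ has infinitely many sheets. (Note also that with your choice the cover $N_0 \times \tor^k$ carries a genuine product metric, which is exactly what the paper warns cannot be arranged on a finite cover.) The paper gets finiteness from a different source: Corollary \ref{virtual splitting for higher rank} (Ivanov--Kapovitch) produces a finite-index subgroup $G' \times \integer^k$ of $\pi_1(N)$, hence a finite cover $\tilde{N}$; one then shows $G'$ acts freely and properly discontinuously on $\hat{H}$, that $\tilde{N}$ is a $\tor^k$-bundle over $N_1 = \hat{H}/G'$ with totally geodesic flat fibers, and uses Eberlein's section construction (Lemma 4 and Remark 1 of \cite{Eberlein1982}) to obtain a diffeomorphism $\tilde{N} \cong N_1 \times \tor^k$ --- only a diffeomorphism, not an isometry --- with the stated behavior on fibers. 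Replacing your last paragraph by this route repairs the proof; no amount of ``affineness'' will make $\rho(\pi_1(N))$ discrete.
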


\noindent It is not claimed in Theorem \ref{convex splitting for subgroups}(c) that $N_1 \times \tor^k$ has a product metric, and the statement cannot be strengthened to the isometric splitting of a finite cover, as there are examples in \cite{LawsonYau1972}, \cite{Eberlein1980}, and \cite{Eberlein1981} of compact $N_1 \times \tor^k$ with nonpositive curvature that are not finitely covered by isometric products with flat $k$-dimensional toruses.

Eberlein additionally proved that every compact manifold having nonpositive sectional curvature and fundamental group with nontrivial center is of a canonical form. This result also generalizes, provided one modifies the definition of a canonical manifold in \cite{Eberlein1982} appropriately. Let $\hat{H}$ be a complete and simply connected manifold with no conjugate points, $\Gamma_0$ a properly discontinuous group of isometries of $\hat{H}$, where $\hat{H}/\Gamma_0$ is compact, and $\rho : \Gamma_0 \to \tor^k$ a homomorphism whose kernel contains no nontrivial elements with fixed points. Define an action of $\Gamma_0$ on $\hat{H} \times \tor^k$ by $\Phi(\xi,\hat{x}) = (\Phi(\hat{x}),\rho(\Phi) \cdot \xi)$ for all $\Phi \in \Gamma_0$, $\xi \in \tor^k$, and $\hat{x} \in \hat{H}$. Then the quotient $(\hat{H} \times \tor^k)/\Gamma_0$ is called a \textbf{$k$-canonical} manifold.

\begin{theorem}\label{canonical form for subgroups}
    Let $N$ be a compact Riemannian manifold with no conjugate points and $Z$ a subgroup of $Z(\pi_1(N))$. If each Busemann function associated with $Z$ is convex or concave, then $N$ is a $k$-canonical manifold for $k = \mathrm{rank}\,Z$. When $Z = Z(\pi_1(N))$, $\Gamma_0$ is centerless.
\end{theorem}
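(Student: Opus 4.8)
The plan is to push everything through the isometric splitting $\hat N \cong \hat H \times \real^k$ provided by Theorem \ref{convex splitting for subgroups}(a), together with the product covering $\psi_0$ of Theorem \ref{splitting for subgroups when integrals vanish}(d), and then to exhibit $N$ directly as a quotient of the required type. Write $G := \pi_1(N)$. First I would fix an isometry $\hat N = \hat H \times \real^k$ under which $Z$ acts by $z\cdot(\hat x,v) = (\hat x, v+\tau_z)$, where $z \mapsto \tau_z$ is an injective homomorphism onto a rank-$k$ lattice $\Lambda \subseteq \real^k$ (this is the content of $\psi_0$ being a product map, with $k = \mathrm{rank}\,Z$); since $\hat H$ is a Riemannian factor of $\hat N$, it is complete, simply connected, and has no conjugate points. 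The $\real^k$-foliation is the lift of the totally geodesic flat torus foliation of Theorem \ref{convex splitting for subgroups}(b), hence is $G$-invariant; being isometries, the elements of $G$ also preserve its orthogonal distribution, which is tangent to the totally geodesic slices $\hat H\times\{v\}$, so each $g\in G$ splits as a pair $g = (p_1(g),p_2(g))$ of isometries of the two factors, with $p_1,p_2$ homomorphisms. Because $Z$ is central, $g z g^{-1} = z$, so $p_2(g)$ commutes with translation by every $\tau_z$; writing $p_2(g)(v) = A_g v + c_g$ with $A_g \in O(k)$, this forces $A_g$ to fix $\Lambda$ pointwise, and since $\Lambda$ spans $\real^k$ one gets $A_g = I$. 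Thus every $p_2(g)$ is the translation $v\mapsto v+c_g$, and $c_{gh} = c_g + c_h$.

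Next I would form $\tor^k := \real^k/\Lambda$. Since $Z$ acts trivially on $\hat H$ and by $\Lambda$ on $\real^k$, one has $(\hat H\times\real^k)/Z = \hat H\times\tor^k$, and because $Z$ is normal in $G$ the map $\hat H\times\tor^k = \hat N/Z \to \hat N/G = N$ is a covering with deck group $\Gamma_0 := G/Z$, which therefore acts freely and properly discontinuously on $\hat H\times\tor^k$. Since $p_1$ vanishes on $Z$, it induces an action of $\Gamma_0$ on $\hat H$ by isometries, and the homomorphism $g\mapsto c_g$ from $G$ to $\real^k$ descends, modulo $\Lambda$, to a homomorphism $\rho\colon\Gamma_0\to\tor^k$. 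The induced $\Gamma_0$-action on $\hat H\times\tor^k$ is then $\bar g\cdot(\hat x,\xi) = (p_1(g)\hat x,\ \xi + \rho(\bar g))$, which is precisely the action appearing in the definition of a $k$-canonical manifold.

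It then remains to verify the hypotheses in that definition. Properness of the $\Gamma_0$-action on $\hat H$: for compact $C\subseteq\hat H$, the set $\{\bar g : p_1(g)C\cap C\neq\emptyset\}$ coincides with $\{\bar g : \bar g(C\times\tor^k)\cap(C\times\tor^k)\neq\emptyset\}$, which is finite. Compactness of $\hat H/\Gamma_0$: it is the image of the compact manifold $N$ under the map induced by projection to the $\hat H$-factor. Finally, suppose $\bar g\in\ker\rho$ is nontrivial; then $p_2(g)$ is translation by an element of $\Lambda = p_2(Z)$, so $g = hz$ with $z\in Z$ and $h\in\ker p_2$, and $h\neq e$ because $Z\cap\ker p_2 = \{e\}$. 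Since $h$ acts on $\hat N$ as $p_1(h)\times\mathrm{id}_{\real^k}$ and $G$ acts freely, $p_1(h)$ has no fixed point on $\hat H$; as $p_1(g) = p_1(h)$, neither does $\bar g$. Hence $\ker\rho$ contains no nontrivial element with a fixed point, and $N$ is $k$-canonical. When $Z = Z(\pi_1(N))$, centerlessness of $\Gamma_0$ follows from the translation structure: if $\bar g$ were central in $G/Z$, then $[g,h]\in Z$ for all $h\in G$, so $p_2([g,h]) = [p_2(g),p_2(h)] = \mathrm{id}$ because the two translations commute; since $p_2$ is injective on $Z$, this gives $[g,h] = e$, whence $g\in Z(\pi_1(N)) = Z$ and $\bar g = e$.

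The main obstacle I anticipate is the first step: justifying that $G$ respects the product structure, i.e. that the Euclidean factor is canonical, which must be read off from the description of the $\real^k$-foliation as the lift of the flat totally geodesic torus foliation of Theorem \ref{convex splitting for subgroups}(b) together with the total geodesy of the complementary slices in a Riemannian product. Once that is in place, the vanishing of the rotational parts $A_g$, and with it both the $k$-canonical form and the centerless conclusion, is forced by the centrality of $Z$ and the fact that $\Lambda$ spans $\real^k$; the remainder is the routine bookkeeping of proper discontinuity and of the no-fixed-point condition on $\ker\rho$.
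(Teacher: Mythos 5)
Your argument is correct and is essentially the proof the paper intends: the paper omits the details precisely because they follow Eberlein's argument with Theorem \ref{convex splitting for subgroups}(a) replacing his Lemma 1, and your proof is that argument — split $\hat{N}$ isometrically as $\hat{H} \times \real^k$, use invariance of the two foliations to decompose each deck transformation as $(p_1(g),p_2(g))$, use centrality of $Z$ and the spanning lattice $\Lambda$ to kill the rotational parts, and read off $\rho$ and the canonical structure, with the centerless statement following from injectivity of $p_2$ on $Z$. The only cosmetic point is that the translation action of $Z$ on the $\real^k$-fibers is the content of Theorem \ref{convex splitting for subgroups}(a) itself (plus discreteness of the deck action), rather than of $\psi_0$ being a product map.
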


\noindent The proof of Theorem \ref{canonical form for subgroups} will be omitted, as it exactly follows Eberlein's argument, using Theorem \ref{convex splitting for subgroups}(a) in the place of Lemma 1 of \cite{Eberlein1982}.

\section{Preliminaries}

Throughout this paper, $(N,h)$ will denote a compact, connected, $n$-dimensional, and $C^r$ Riemannian manifold for $r \geq 2$. The covering metric on $\hat{N}$ will be denoted by $\hat{h}$. The metric $h$ will have no conjugate points, which by definition means that the exponential map on each tangent space is nonsingular. It follows that, for each $p \in \hat{N}$, $\exp_p : T_p \hat{N} \to \real^n$ is a diffeomorphism. Thus $N$ is aspherical and, consequently, $\pi_1(N)$ is torsion free \cite{Hurewicz1936}. It is well known that a complete manifold with nonpositive sectional curvature has no focal points, that a complete manifold with no focal points has no conjugate points, and that neither of these implications is reversible within the space of compact manifolds of any fixed dimension at least two \cite{Gulliver1975}.

For any tangent vector $v$, denote by $\gamma_v$ the geodesic $t \mapsto \exp(tv)$. Corresponding to each unit vector $v \in T\hat{N}$ is the Busemann function $b_v : \hat{N} \to \real$ defined by $b_v(x) = \lim_{t \to \infty} \big[ t - d \big( \gamma_v(t), x \big) \big]$. It will be convenient to generalize the idea of a Busemann function to arbitrary tangent vectors by setting
\[
    b_v = \left\{ \begin{array}{cc} \|v\| b_{v/\|v\|} & \textrm{if } v \neq 0 \\ 0 & \textrm{if } v = 0 \end{array} \right.
\]
for any $v \in T\hat{N}$. It was essentially shown by Busemann \cite{Busemann1955} that associated to each $z \in Z(\pi_1(N))$ is a unique constant-length vector field $\omega_z$ on $N$ with the property that each $\gamma_{\omega_z(x)}|_{[0,1]}$ is a closed geodesic representing $z$ in $\pi_1(N,x)$. Denote by $\hat{\omega}_z$ the lift of $\omega_z$ to $\hat{N}$. A Busemann function $b_v$ is \textbf{central} if $v = \hat{\omega}_z(p)$ for some $z \in Z(\pi_1(N))$ and $p \in \hat{N}$. Following the arguments in \cite{BuragoIvanov1994} and \cite{Heber1994}, one finds that, for each $z \in Z(\pi_1(N))$, the corresponding central Busemann functions have gradient field $\hat{\omega}_z$. Arbitrary Busemann functions are known only to be $C^1$, except for manifolds with bounded asymptote, a class which includes those with no focal points, where they are $C^2$ \cite{Eschenburg1977}. However, for $z \in Z(\pi_1(N))$, the inverse function theorem implies that $\omega_z$ is $C^{r-1}$ and, consequently, that each central Busemann function is $C^r$ \cite{Dibble2019}.

For each $v \in T\hat{N}$, a horosphere of $v$ is, by definition, a level set of the Busemann function $b_v$. When $z \in Z(\pi_1(N))$, the horospheres of $\hat{\omega}_z(\hat{x})$ are the leaves of the normal distribution to $\hat{\omega}_z$ for any $\hat{x} \in \hat{N}$ and, consequently, form a $C^r$ foliation of $\hat{N}$ by hypersurfaces. In this way, one may speak of a horosphere $H$ of $z$ itself. For each such $H$, the normal bundle $NH$ is a trivial line bundle; since no point of $\hat{N}$ is focal to $H$, the exponential map on $NH$ is a diffeomorphism onto $\hat{N}$. In this way, $\hat{N} \cong H \times \real$.

The action of $\pi_1(N)$ on $\hat{N}$ by deck transformations will be denoted by $(\alpha,x) \mapsto \alpha(x)$. The following is a special case of an important lemma of Ivanov--Kapovitch \cite{IvanovKapovitch2014}. However, as they point out, the methods of Croke--Schroeder \cite{CrokeSchroeder1986} suffice to prove it for central elements. In particular, the narrow statement here requires only that the metric be $C^2$. The more general statement in \cite{IvanovKapovitch2014} requires $C^k$ regularity for some $k$ depending on $n$.

\begin{lemma}\label{constant change in busemann functions}
   Let $\alpha \in \pi_1(N)$ and $z \in Z(\pi_1(N))$. If $\gamma_v$ is an axis of $z$, then $b_v(\alpha(\hat{x})) - b_v(\hat{x})$ is independent of $\hat{x} \in \hat{N}$.
\end{lemma}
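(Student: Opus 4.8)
The plan is to exploit the fact that $z$ is central, so that $\alpha$ and $z$ commute as deck transformations, and to combine this with the characterization of axes of $z$ via the geodesic foliation associated to $\hat\omega_z$. First I would observe that, since $\gamma_v$ is an axis of $z$, the vector $v$ points along $\hat\omega_z$, i.e. $v = \hat\omega_z(\hat p)$ where $\hat p = \gamma_v(0)$, and that for any $\hat x$ the geodesic $t \mapsto \gamma_{\hat\omega_z(\hat x)}(t)$ is again an axis of $z$; because $z$ is central, $z$ translates each such axis to itself by the same ``period'' $\ell = \|v\|$. The key structural input is that $b_v$ is a central Busemann function, hence $C^r$ with $\grad b_v = \hat\omega_z$ along the foliation, and that $z$ acts on $\hat N$ by $z(\hat x) = \gamma_{\hat\omega_z(\hat x)}(\ell)$, so $b_v(z(\hat x)) - b_v(\hat x) = \ell = \|v\|$ is already constant (this is the co-compact/axis case of the statement with $\alpha = z$).

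Next I would reduce the general $\alpha$ case to a differential identity. Define $f(\hat x) = b_v(\alpha(\hat x)) - b_v(\hat x)$, a $C^1$ function on $\hat N$. Differentiating, $\grad f(\hat x) = (d\alpha)^{*}\,\grad b_v(\alpha(\hat x)) - \grad b_v(\hat x) = (d\alpha)^{*}\hat\omega_z(\alpha(\hat x)) - \hat\omega_z(\hat x)$. Since $\omega_z$ is a well-defined vector field \emph{on} $N$ and $\alpha$ is a deck transformation, the lift $\hat\omega_z$ is $\alpha$-equivariant: $(d\alpha)^{*}\hat\omega_z(\alpha(\hat x)) = \hat\omega_z(\hat x)$. (This is exactly Busemann's construction: $\gamma_{\hat\omega_z(\hat x)}|_{[0,1]}$ projects to the closed geodesic representing $z$ in $\pi_1(N,x)$, and that representative is unchanged by changing the base-lift via $\alpha$, because $z$ is central.) Therefore $\grad f \equiv 0$ on the connected manifold $\hat N$, and $f$ is constant.

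The step I expect to be the main obstacle is making the $\alpha$-equivariance of $\hat\omega_z$ fully rigorous at the level needed here, i.e. verifying $(d\alpha)\,\hat\omega_z(\hat x) = \hat\omega_z(\alpha(\hat x))$ for \emph{all} $\alpha \in \pi_1(N)$, not merely for $\alpha$ in the center. This is where centrality of $z$ is essential: the closed geodesic representing $z$ based at $x$ is, up to free homotopy, independent of the path used to identify $\pi_1(N,x)$ with $\pi_1(N)$ precisely because $z$ commutes with everything, so $\omega_z$ descends to a genuine vector field on $N$ and its lift is automatically equivariant under the \emph{entire} deck group. Once this is in hand, the computation $\grad f = 0$ is immediate. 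An alternative, more hands-on route — and the one I would use as a fallback if I wanted to avoid invoking the descent of $\omega_z$ — is the Croke--Schroeder displacement-function argument referenced in the text: show that $\hat x \mapsto d(\hat x, z(\hat x))$ is constant (using that $z$ is central, all its axes are parallel in the sense of bounding flat-like strips, which forces the displacement to be constant and equal to the translation length), and then use the cocycle identity $b_v(\alpha z^m(\hat x)) - b_v(\hat x) = \big(b_v(\alpha z^m(\hat x)) - b_v(z^m(\hat x))\big) + m\ell$ together with $\alpha z = z\alpha$ to push $\hat x$ to infinity along $\gamma_v$ and read off that the $\alpha$-contribution is asymptotically constant, hence constant by continuity.
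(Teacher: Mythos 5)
Your main argument is sound, but be aware of how it sits relative to the paper: the paper gives no proof of this lemma at all, quoting it as a special case of a lemma of Ivanov--Kapovitch and noting that the Croke--Schroeder methods prove it for central elements with only a $C^2$ metric. Your route instead rests on the fact, stated earlier in the paper (following Burago--Ivanov and Heber), that the central Busemann functions of $z$ have gradient field $\hat\omega_z$. Granting that, the computation works: any axis of $z$ is tangent to $\hat\omega_z$ (since $\exp_{\hat x}$ is a diffeomorphism, the geodesic segment from $\hat x$ to $z(\hat x)$ is unique, so an axis through $\hat x$ must contain the lift of $\gamma_{\omega_z(x)}$), the lift $\hat\omega_z$ of a vector field on $N$ is automatically invariant under the \emph{entire} deck group because $d\pi\circ d\alpha=d\pi$ and $d\pi$ is fiberwise injective (centrality enters only through the existence of $\omega_z$ on $N$), and hence $\nabla\bigl(b_v\circ\alpha-b_v\bigr)(\hat x)=(d\alpha_{\hat x})^{-1}\hat\omega_z(\alpha(\hat x))-\hat\omega_z(\hat x)=0$ up to the harmless scaling of $b_v$, so the difference is constant on the connected manifold $\hat N$. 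There is no circularity with the paper's ordering: the gradient identity is obtained from the periodicity of $b_v$ along the axes (i.e.\ the trivial case $\alpha=z^k$, which you correctly isolate), not from this lemma. What you should recognize is that the real content is concentrated in that quoted gradient identity; the citation route the paper takes (Croke--Schroeder/Ivanov--Kapovitch) proves the constancy statement directly at $C^2$ regularity, without presupposing the $C^1$ structure of central Busemann functions, which is why the paper phrases the remark about regularity the way it does.

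One caution about your fallback sketch: the assertion that centrality forces the displacement function $\hat x\mapsto d(\hat x,z(\hat x))$ to be constant is not available in this setting. The paper states explicitly, in the proof of Theorem \ref{convex splitting for subgroups}, that for compact manifolds with no conjugate points it is not known that central elements are Clifford translations; constant displacement is a no-focal-points (or nonpositive curvature) phenomenon. So that alternative, as sketched, would not go through, and the ``flat-like strips'' it invokes are likewise unavailable. Your primary argument does not need it, so this does not affect the correctness of the proof you actually propose.
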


\noindent For any choices of $p,q \in \hat{N}$, $b_{\omega_{z_1}(p)}$ and $b_{\omega_{z_1}(q)}$ differ by a constant. Thus the function $B(z_0,z_1) = b_{\omega_{z_1}(p)}(z_0({x})) - b_{\omega_{z_1}(p)}(\hat{x})$, defined on $Z(\pi_1(N)) \times Z(\pi_1(N))$, is also independent of the choice of $p$. Loosely speaking, $B(z_0,z_1)$ is the change in the Busemann functions of $z_1$ in the direction of $z_0$.

Lemma \ref{constant change in busemann functions} implies, by the argument in \cite{IvanovKapovitch2014}, the virtual splitting of cyclic subgroups of $Z(\pi_1(N))$.

\begin{theorem}[Ivanov--Kapovitch]\label{virtual splitting}
    For each nontrivial $z \in Z(\pi_1(N))$, there exists a finite-index subgroup $G$ of $\pi_1(N)$ isomorphic to a direct product $G' \times \integer$, under which identification $z$ corresponds to a generator of the $\integer$-factor.
\end{theorem}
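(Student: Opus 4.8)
The plan is to extract from Lemma \ref{constant change in busemann functions} a ``displacement'' homomorphism $\Phi : \pi_1(N) \to \real$ that is nonzero on $z$, and then to use elementary linear algebra over $\integer$ to pass to a finite-index subgroup on which $\Phi$ becomes an honest retraction onto $\langle z \rangle$.

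First I would fix a nontrivial $z \in Z(\pi_1(N))$ and an axis $\gamma_v$ of $z$, so that $v = \hat{\omega}_z(p)$ for some $p \in \hat{N}$ and $z(\gamma_v(t)) = \gamma_v(t+1)$ for all $t$; the latter identity uses that the gradient field of the central Busemann functions of $z$ is $\hat{\omega}_z$ and that $\omega_z$ descends to $N$, so that $dz$ carries $v$ to $\hat{\omega}_z(z(p))$. Lemma \ref{constant change in busemann functions} says that $b_v(\alpha(\hat{x})) - b_v(\hat{x})$ is independent of $\hat{x} \in \hat{N}$, so I can define $\Phi(\alpha) = b_v(\alpha(\hat{x})) - b_v(\hat{x})$; decomposing $b_v(\alpha\beta(\hat{x})) - b_v(\hat{x}) = [b_v(\alpha(\beta\hat{x})) - b_v(\beta\hat{x})] + [b_v(\beta\hat{x}) - b_v(\hat{x})]$ and applying the lemma to the first bracket shows $\Phi$ is a homomorphism. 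Evaluating $b_v$ along its own geodesic gives $b_v(\gamma_v(s)) = s\|v\|^2$, whence $\Phi(z) = \|v\|^2 > 0$; in particular $\Phi(z) \neq 0$, which is consistent with $\langle z \rangle \cong \integer$ (recall $\pi_1(N)$ is torsion free).

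The remaining work is group-theoretic. Since $N$ is compact, $\pi_1(N)$ is finitely generated, so $A := \Phi(\pi_1(N))$ is a finitely generated subgroup of $(\real,+)$ and hence free abelian of finite rank. Because $A$ is free abelian and $\Phi(z) \neq 0$, I can write $\Phi(z) = d\,c_0$ with $c_0 \in A$ primitive and $d \in \nat$, pick a complementary summand so that $A = \langle c_0 \rangle \oplus A''$, and set $B = \langle \Phi(z) \rangle \oplus A''$, a subgroup of index $d$ in $A$ on which the projection $q : B \to \langle \Phi(z) \rangle \cong \integer$ satisfies $q(\Phi(z)) = 1$. Let $G = \Phi^{-1}(B)$. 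Since $\Phi$ is onto $A$, the induced map $\pi_1(N) \to A/B$ is onto with kernel $G$, so $[\pi_1(N):G] = [A:B] = d < \infty$; and $z \in G$ because $\Phi(z) \in B$. Then $r := q \circ \Phi|_G : G \to \integer$ is a homomorphism with $r(z) = 1$, so $\langle z \rangle \cap \ker r$ is trivial and $\langle z \rangle \cdot \ker r = G$; since $z$ is central, this exhibits $G = \ker r \times \langle z \rangle$, and taking $G' = \ker r$ finishes the proof.

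The geometric content is entirely contained in Lemma \ref{constant change in busemann functions}, so the only genuinely substantive step left is the passage to finite index: $\Phi$ need not already map onto a cyclic subgroup of $\real$, and the fact that $\Phi(\pi_1(N))$ is free abelian of finite rank --- which is exactly what compactness of $N$ provides --- is what lets me split off $\langle \Phi(z) \rangle$ as a direct summand of a finite-index subgroup. A secondary point to get right is the normalization in $z(\gamma_v(t)) = \gamma_v(t+1)$ and the resulting value $\Phi(z) = \|v\|^2$, which hinges on $\hat{\omega}_z$ being the gradient field of the central Busemann functions of $z$.
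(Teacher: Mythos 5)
Your proposal is correct and is essentially the argument the paper itself invokes: the paper gives no independent proof but defers to Ivanov--Kapovitch, whose argument is exactly this one --- Lemma \ref{constant change in busemann functions} makes the Busemann displacement $\alpha \mapsto b_v(\alpha(\hat{x})) - b_v(\hat{x})$ a well-defined homomorphism $\pi_1(N) \to \real$ that is nonzero on $z$, and the passage to a finite-index subgroup splitting as $G' \times \integer$ with $z$ generating the $\integer$-factor is the same elementary manipulation of the finitely generated (hence free abelian) image that you carry out. Your normalization $\Phi(z) = \|v\|^2$ matches the paper's convention $B(z,z) = \|z\|_\infty^2$, so nothing further is needed.
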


\noindent A simple consequence is that, when $Z(\pi_1(N))$ has rank at least $k$, a finite-index subgroup of $\pi_1(N)$ splits as a product with $\integer^k$.

\begin{corollary}\label{virtual splitting for higher rank}
    If $z_1,\ldots,z_k$ are independent elements of $Z(\pi_1(N))$, then there exists a finite-index subgroup $G$ of $\pi_1(N)$ isomorphic to a direct product $G' \times \integer^k$, under which identification the $\integer^k$-factor is generated by elements of the form $z_1^{m_1},\ldots,z_k^{m_k}$ for $m_i \geq 1$.
\end{corollary}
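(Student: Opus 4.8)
The plan is to avoid iterating Theorem \ref{virtual splitting} directly (the complementary factors it produces need not be fundamental groups of compact manifolds, so a naive induction is awkward) and instead to reduce everything to linear algebra over $\mathbb{Q}$ in first homology. The key preliminary step is to use Theorem \ref{virtual splitting} to show that every nontrivial $z \in Z(\pi_1(N))$ has infinite order in $H_1(N;\integer) = \pi_1(N)^{\mathrm{ab}}$. Indeed, Theorem \ref{virtual splitting} gives a finite-index subgroup $G = G' \times \langle z \rangle \leq \pi_1(N)$ in which $z$ generates the $\integer$-factor; let $\phi : G \to \integer$ be the projection onto that factor, so $\phi(z) = 1$, and let $\mathrm{cor} : \mathrm{Hom}(G,\integer) \to \mathrm{Hom}(\pi_1(N),\integer)$ be the transfer. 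Because $z$ is central, every left coset representative $g$ of $G$ in $\pi_1(N)$ satisfies $g z g^{-1} = z \in G$, so the Verlagerung of $z$ is $z^{[\pi_1(N):G]}$; hence $\mathrm{cor}(\phi)(z) = [\pi_1(N):G]\,\phi(z) = [\pi_1(N):G] \neq 0$, and $[z]$ is not torsion. (This is exactly where the geometric hypothesis enters, via Theorem \ref{virtual splitting} and hence Lemma \ref{constant change in busemann functions}: for a general finitely generated group a central element of infinite order can lie in the commutator subgroup, as for a lattice in the Heisenberg group, in which case no virtual splitting exists.)

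Next I would deduce that the classes $[z_1],\dots,[z_k]$ are linearly independent in $H_1(N;\mathbb{Q})$. A relation $\sum_i a_i[z_i] = 0$ with $a_i \in \integer$ not all zero would say that $w := \prod_i z_i^{a_i}$ maps to a torsion class in $H_1(N;\integer)$; but $w$ is central, and, since $z_1,\dots,z_k$ are independent (so $\langle z_1,\dots,z_k\rangle \cong \integer^k$ with basis $z_1,\dots,z_k$), $w$ is nontrivial, contradicting the previous paragraph. Since $\pi_1(N)$ is finitely generated, $\mathrm{Hom}(\pi_1(N),\integer)$ is a full lattice in $H^1(N;\mathbb{Q}) = \mathrm{Hom}(\pi_1(N),\mathbb{Q})$, which pairs perfectly with $H_1(N;\mathbb{Q})$; so I can choose $\psi_1,\dots,\psi_k \in \mathrm{Hom}(\pi_1(N),\integer)$ and an integer $D \geq 1$ with $\psi_j(z_i) = D\,\delta_{ij}$.

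Finally, set $\Psi := (\psi_1,\dots,\psi_k) : \pi_1(N) \to \integer^k$, $G' := \ker\Psi$, $A := \langle z_1,\dots,z_k\rangle$, and $G := G'A$. Here $G' \trianglelefteq \pi_1(N)$, while $A \trianglelefteq \pi_1(N)$ since $A$ is central; and $\Psi(\prod_i z_i^{n_i}) = (Dn_1,\dots,Dn_k)$, so $\Psi|_A$ is injective and $G' \cap A = \{e\}$, while $[\pi_1(N):G] = [\Psi(\pi_1(N)) : \Psi(A)] \leq [\integer^k : D\integer^k] = D^k < \infty$. Thus $G$ is a finite-index subgroup equal to the internal direct product of the two normal subgroups $G'$ and $A$, so $G \cong G' \times A$ with $A \cong \integer^k$ freely generated by $z_1,\dots,z_k$; this proves the corollary, indeed with $m_1 = \dots = m_k = 1$. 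The only substantive point is the first paragraph; once nontrivial central elements are known to be non-torsion in homology, the remaining steps are routine.
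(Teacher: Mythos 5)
Your proof is correct, but it takes a genuinely different route from the paper's. The paper applies Theorem \ref{virtual splitting} to each $z_i$ separately to obtain splittings $\varphi_i = (\pi_i,\sigma_i) : G_i \to G_i' \times \integer$, then intersects, setting $G = \cap_i G_i$ and $G' = \cap_i G_i'$, and checks that $\varphi = (\pi_k \circ \cdots \circ \pi_1,\sigma_1,\ldots,\sigma_k)$ is injective; the powers $z_i^{m_i}$ appear precisely because $z_i$ itself need not lie in the intersection $G$, only some power does. You instead use Theorem \ref{virtual splitting} only to extract the homological statement that a nontrivial central element is non-torsion in $H_1(N;\integer)$ --- your transfer computation is correct, since centrality and $z \in G$ force the coset permutation to be trivial, giving $\mathrm{cor}(\phi)(z) = [\pi_1(N):G]\,\phi(z) \neq 0$ --- and then you build integral characters $\psi_j$ with $\psi_j(z_i) = D\delta_{ij}$ and split off $A = \langle z_1,\ldots,z_k\rangle$ directly inside $G = \ker\Psi \cdot A$. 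The remaining verifications are fine: $\ker\Psi \cap A = \{e\}$ because $\Psi|_A$ is injective, normality of $A$ is automatic from centrality so $G \cong \ker\Psi \times A$, and $[\pi_1(N):G] = [\Psi(\pi_1(N)):\Psi(A)] \leq D^k$ since $G \supseteq \ker\Psi$. What your approach buys: a slightly stronger conclusion (the $\integer^k$-factor is generated by $z_1,\ldots,z_k$ themselves, i.e., $m_i = 1$), and it sidesteps the bookkeeping in the paper's proof of composing the projections $\pi_i$, whose domains require some care. What the paper's approach buys: it stays entirely within elementary manipulations of the subgroups produced by Theorem \ref{virtual splitting}, with no appeal to the transfer or to rational (co)homology.
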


\begin{proof}
    Let $\varphi_i = (\pi_i,\sigma_i) : G_i \to G_i' \times \integer$ be the isomorphisms guaranteed by Theorem \ref{virtual splitting}, where each $G_i'$ is taken to be a subgroup of $G_i$ on which $\pi_i$ is the identity. Let $G = \cap_{i=1}^k G_i$, $G' = \cap_{i=1}^k G_i'$, and $\pi' = \pi_k \circ \cdots \circ \pi_1$. Then $\varphi = (\pi',\sigma_1,\ldots,\sigma_k) : G \to G' \times \integer^k$ is a homomorphism. Note that
    \[
        \mathrm{Ker}\,\varphi \subseteq \cap_{i=1}^k \mathrm{Ker}\,\sigma_i \subseteq \cap_{i=1}^k G_i' = G'\textrm{.}
    \]
    Since each $\pi_i$ is the identity on $G'$, $\varphi$ must be one-to-one. Since $G$ has finite index, there exist smallest $m_i \geq 1$ such that $z_i^{m_i} \in G$. It follows that $\varphi(G)$ is isomorphic to $G' \times \integer^k$ in such a way that the $z_i^{m_i}$ generate the $\integer^k$-factor.
\end{proof}

For any $z \in Z(\pi_1(N))$, write $\| z \|_\infty = B(z,z)^{1/2}$. If $Z$ is any subgroup of $Z(\pi_1(N))$, then $Z \cong \integer^k$ for some $0 \leq k \leq n$ \cite{DibbleToAppear}. With respect to a fixed isomorphism $Z \to \integer^k$, $\| \cdot \|_\infty$ agrees with the asymptotic norm of the orbit metric on $Z$ obtained from its action on $\hat{N}$. That is, if $d$ denotes the induced $\integer^k$-equivariant distance function on $\integer^k$, then, up to the identification of $Z$ with $\integer^k$,
\[
    \| z \|_\infty = \lim_{m \to \infty} \frac{d(0,mz)}{m}\textrm{,}
\]
and this extends uniquely to a norm on $\real^k$ (see Section 8.5 of \cite{BuragoBuragoIvanov2001} for details). Following \cite{BuragoIvanov1994}, one would like to define an inner product on $\real^k$ that extends $B$ with respect to this identification. In fact, elementary arguments show that $B$ satisfies many of the properties of an inner product: $B(z_0,mz_1) = mB(z_0,z_1) = B(z_0,mz_1)$ for all $m \in \integer$; by Corollary 4.2 of \cite{IvanovKapovitch2014}, $B(z_0 + z_1,z_2) = B(z_0,z_2) + B(z_1,z_2)$; and, significantly, $B$ satisfies the Cauchy--Schwarz inequality,
\begin{equation}\label{cauchy--schwarz}
|B(z_0,z_1)| \leq \|z_0\|_\infty \|z_1\|_\infty\textrm{,}
\end{equation}
with equality if and only if $z_0$ and $z_1$ are rationally related, in the sense that $m z_0 = \ell z_1$ for some $m,\ell \in \integer$ not both zero. However, it is not clear that, in general, $B$ is symmetric or additive in its second slot. For the purpose of showing that $\| \cdot \|_\infty$ is generated by an inner product, those two conditions are equivalent: Additivity would follow from symmetry, and, if additivity held, then the symmetrization of $B$ would extend to an inner product generating $\| \cdot \|_\infty$.

This section ends with a technical lemma about the horospheres of central Busemann functions. Although it is many ways unclear how well behaved they are, they may in some cases be used to construct fundamental domains of $\pi$. A preliminary lemma is first presented.

\begin{lemma}\label{primitive elements}
    If $z \in Z(\pi_1(N))$ is primitive, in the sense that $z \neq mz'$ for all $|m| > 1$ and $z' \in \pi_1(N)$, then, for each $\hat{x} \in \hat{N}$, $\pi \circ \gamma_{\hat{\omega}(\hat{x})}$ is injective on $[0,\|z\|_\infty)$.
\end{lemma}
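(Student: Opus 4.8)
The plan is to argue by contradiction, studying how the deck group can force a self-intersection of the image of the geodesic $\gamma := \gamma_{\hat\omega_z(\hat x)}$, which I take to be parametrized by arc length. By the discussion preceding the lemma, $\gamma$ is a gradient curve of the central Busemann function $b_v$ associated with $z$ and $\hat x$, whose gradient field is $\hat\omega_z$, and $z$ translates $\gamma$ by $\|z\|_\infty$, that is, $z(\gamma(u)) = \gamma(u + \|z\|_\infty)$ for every $u \in \real$; this follows from the description of $\omega_z$ together with the $\pi_1(N)$-invariance of $\hat\omega_z$. Because $(N,h)$ has no conjugate points, $\exp$ is a diffeomorphism on each tangent space of $\hat N$, so $\gamma$ is an embedded line and is the unique geodesic through any two of its points.

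First I would assume $\pi \circ \gamma$ fails to be injective on $[0,\|z\|_\infty)$ and fix $0 \le s < t < \|z\|_\infty$ with $\pi(\gamma(s)) = \pi(\gamma(t))$, so that $\gamma(t) = \alpha(\gamma(s))$ for some $\alpha \in \pi_1(N)$; since $\gamma$ is embedded and $s \ne t$, necessarily $\alpha \ne e$. The key step is to show that $\alpha$ acts on $\gamma$ as an arc-length translation. Since $z$ is central, $\alpha$ commutes with every power of $z$, whence
\[
    \alpha\big(\gamma(s + k\|z\|_\infty)\big) = \alpha z^k(\gamma(s)) = z^k\alpha(\gamma(s)) = z^k(\gamma(t)) = \gamma\big(t + k\|z\|_\infty\big)
\]
for all $k \in \integer$. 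Thus the geodesic $u \mapsto \alpha(\gamma(u + s - t))$ and the geodesic $\gamma$ pass through the two distinct points $\gamma(t)$ and $\gamma(t + \|z\|_\infty)$ with matching arc-length parameters, so they coincide; writing $\delta := t - s \in (0,\|z\|_\infty)$, this says $\alpha(\gamma(u)) = \gamma(u + \delta)$ for all $u$, so $\alpha$ and $z$ translate $\gamma$ by $\delta$ and $\|z\|_\infty$, respectively.

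It then remains to contradict primitivity. The subgroup $G := \langle \alpha, z\rangle$ of $\pi_1(N)$ acts on the image of $\gamma$ by arc-length translations, giving a homomorphism $\tau : G \to \real$ with $\tau(\alpha) = \delta$ and $\tau(z) = \|z\|_\infty$; an element of $\ker\tau$ fixes $\gamma$ pointwise, so, the deck action being free, $\tau$ is injective. Because the deck action of $\pi_1(N)$ on $\hat N$ is properly discontinuous and each $\gamma([0,L])$ is compact, only finitely many $g \in G$ can satisfy $\tau(g) \in [0,L]$ (such a $g$ carries $\gamma(0)$ into $\gamma([0,L])$), so $\tau(G)$ is a discrete nontrivial subgroup of $\real$ and hence equals $c\,\integer$ for some $c > 0$. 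By injectivity of $\tau$, $G = \langle g_0\rangle$ with $\tau(g_0) = c$; writing $z = g_0^m$ and $\alpha = g_0^\ell$ gives $\|z\|_\infty = mc$, $\delta = \ell c$ and $0 < \ell c < mc$, so $1 \le \ell < m$ and in particular $m \ge 2$. This displays $z$ as a proper power of $g_0 \in \pi_1(N)$, contradicting the primitivity of $z$, and completes the proof.

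I expect the main obstacle to be the second step — upgrading $\gamma(t) = \alpha(\gamma(s))$ to the assertion that $\alpha$ translates $\gamma$ — since a priori the projected geodesic could cross itself transversally. What rescues this is precisely the centrality of $z$, which forces $\gamma$ and $\alpha \circ \gamma$ to meet in at least two points, together with the unique geodesic connectedness of $\hat N$; the remaining steps are routine. One should take care to use only the regularity available for \emph{central} Busemann functions (embeddedness of $\gamma$, the periodicity $z(\gamma(u)) = \gamma(u+\|z\|_\infty)$, and $\nabla b_v = \hat\omega_z$), since general Busemann functions on $N$ need only be $C^1$. Alternatively one could run the argument through the cyclic cover $\hat N/\langle z\rangle$, where $\gamma$ descends to a simple closed geodesic, but the direct approach seems cleanest.
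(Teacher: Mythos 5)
Your proof is correct, and it reaches the contradiction by the same underlying mechanism as the paper --- an early self-intersection of the projected axis exhibits $z$ as a proper power --- but the mechanics are genuinely different. The paper works downstairs with return times: positivity of the injectivity radius forces $T = t_1 - t_0$ to be a rational multiple $(a/b)\|z\|_\infty$ with $|b|>1$, and a B\'ezout identity $ma = nb+1$ then produces a return at time $(1/b)\|z\|_\infty$, so that the loop $\gamma_{\omega_z(\pi(\hat{x}))}|_{[0,(1/b)\|z\|_\infty]}$ is a closed geodesic representing some $w$ with $z = bw$. You work upstairs with the deck group: centrality of $z$ plus unique geodesic connectedness of $\hat{N}$ upgrades $\gamma(t)=\alpha(\gamma(s))$ to the statement that $\alpha$ translates the axis by $\delta = t-s$, and proper discontinuity then identifies $\langle \alpha, z\rangle$, via the translation homomorphism $\tau$, with a discrete subgroup $c\,\integer \subset \real$, so $z$ is a proper power of its generator. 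The two arguments are equivalent in substance ($c$ plays the role of $(1/b)\|z\|_\infty$ and $g_0$ the role of $w$), but yours makes explicit a step the paper leaves implicit: that the return happens along the direction of $\omega_z$, i.e.\ that the responsible deck element translates the axis, which is exactly what justifies treating the return times as a (discrete) subgroup of $\real$ and hence the paper's rationality claim. In exchange, you invoke the classification of discrete subgroups of $\real$ rather than an explicit gcd computation; your reading of the parametrization (arc length, period $\|z\|_\infty$) is the intended one.
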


\begin{proof}
    Assume that $\pi \circ \gamma_{\hat{\omega}(x)}(t_0) = \pi \circ \gamma_{\hat{\omega}(x)}(t_1)$ for $0 \leq t_0 < t_1 < \|z\|_\infty$. Write $T = t_1 - t_0$. Without loss of generality, replace $x$ with $\gamma_{\hat{\omega}(x)}(t_0)$, so that
    \[
        \pi \circ \gamma_{\hat{\omega}(x)}(0) = \pi \circ \gamma_{\hat{\omega}(x)}(T) = \pi \circ \gamma_{\hat{\omega}(x)}(\|z\|_\infty)\textrm{.}
    \]
    Since $\inj \big( \pi(x) \big) > 0$, $T = (a/b)\|z\|_\infty$ for some relatively prime $a,b \in \integer$ with $|b| > 1$. Note that $ma = nb + 1$ for some $m,n \in \integer$. Since $\pi \circ \gamma_{\hat{\omega}(x)}((ma/b)\|z\|_\infty) = \pi \circ \gamma_{\hat{\omega}(x)}(0)$, $\gamma_{\omega(\pi(x))}|_{[0,(1/b)\|z\|_\infty]}$ is a closed geodesic that represents an element $w$ of $\pi_1(N)$ with $z = bw$, which is a contradiction.
\end{proof}

\begin{lemma}\label{fundamental domain}
    Let $z \in Z(\pi_1(N))$ be primitive and $H$ a horosphere of $z$. Then there exist disjoint open subsets $U_1,\ldots,U_K$ of $H$ such that, for $V_i = U_i \times [0,\|z\|_\infty)$ and $V = \cup_{i=1}^K V_i$, $\pi$ is injective on $V$ and $\vol_h \big( N \setminus \pi(V) \big) = 0$.
\end{lemma}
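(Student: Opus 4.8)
The plan is to realise $V$ as an essential fundamental domain for the $\pi_1(N)$-action on the slab $H \times [0,\|z\|_\infty) \subset \hat N$, using the product $\hat N \cong H \times \real$ given by $\exp$ on the trivial normal bundle $NH$ — equivalently, the identification $(\xi,t)\mapsto\Phi_t(\xi)$ for the unit-speed flow $\Phi$ of $\hat\omega_z/\|z\|_\infty$, whose orbits are geodesics because $\hat\omega_z$ is the gradient of a Busemann function and has constant length. First I would note that $z$ acts as $\Phi_{\|z\|_\infty}$, i.e. as $(\xi,t)\mapsto(\xi,t+\|z\|_\infty)$: by the defining property of $\omega_z$, the unit-speed geodesic $t\mapsto\Phi_t(\hat x)$ projects to a loop representing $z$ once $t$ reaches $\|z\|_\infty$. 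Hence $\pi$ restricted to the slab factors as a bijection onto $\hat N/\langle z\rangle$ followed by the covering $q:\hat N/\langle z\rangle\to N$ with deck group $\bar\Gamma:=\pi_1(N)/\langle z\rangle$. Next, Lemma \ref{constant change in busemann functions} shows that each $\alpha\in\pi_1(N)$ changes the Busemann function of $z$ by a constant, hence permutes the horospheres $H\times\{t\}$ by a fixed translation; since $\alpha$ commutes with $\Phi$ (because $\hat\omega_z$ descends to $N$), it acts triangularly, $\alpha(\xi,t)=(\bar\alpha(\xi),\,t+s_\alpha)$, defining a homomorphism $\alpha\mapsto\bar\alpha$ into the homeomorphisms of $H$. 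Its kernel consists of the vertical transformations $\Phi_s$ lying in $\pi_1(N)$; since $\{t\in\real:\Phi_t\in\pi_1(N)\}$ is a discrete, hence cyclic, subgroup of $\real$ containing $\|z\|_\infty$, primitivity of $z$ forces it to be $\|z\|_\infty\,\integer$, so the kernel is $\langle z\rangle$ and $\bar\Gamma$ acts on $H$ — properly discontinuously, since the induced action on $H\times S^1=\hat N/\langle z\rangle$ is and $S^1$ is compact.

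The crux, and the step I expect to be the main obstacle, is that this action of $\bar\Gamma$ on $H$ is free; a priori $\pi_1(N)/\langle z\rangle$ could have torsion even though $\pi_1(N)$ does not. I would show $\bar\Gamma$ is torsion free: if $\bar\alpha^k=e$ then $\alpha^k=z^m$, and writing $d=\gcd(k,m)$ and using that $\pi_1(N)$ is torsion free reduces this to $\alpha^{k_1}=z^{m_1}$ with $\gcd(k_1,m_1)=1$; a B\'ezout identity $ak_1+bm_1=1$ then exhibits $z=(z^a\alpha^b)^{k_1}$ as a $k_1$-th power in $\pi_1(N)$, so $k_1=1$ by primitivity and $\bar\alpha=e$. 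A torsion-free group acting properly discontinuously has finite, hence trivial, point stabilizers, so the action is free; therefore $H\to B:=H/\bar\Gamma$ is a covering map, $B$ is a manifold, and $B$ is compact because it is homeomorphic to $N/S^1$ for the periodic flow induced on $N$ by $\Phi$, whose orbits all have length $\|z\|_\infty$ by Lemma \ref{primitive elements}.

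It remains to run a standard covering-space and Fubini argument over $B$. Covering the compact manifold $B$ by finitely many connected, evenly covered open balls and disjointifying them (discarding the measure-zero union of their boundaries), I obtain disjoint open sets $O_1,\dots,O_K\subset B$ whose union has full measure, and I take $U_i\subset H$ to be a single sheet above $O_i$; the $U_i$ are disjoint open subsets of $H$. Set $V=\bigcup_{i=1}^K U_i\times[0,\|z\|_\infty)$. If $\pi(\xi,t)=\pi(\xi',t')$ with both points in $V$, then $\bar\alpha(\xi)=\xi'$ and $t'=t+s_\alpha$ for some $\alpha$; projecting to $B$ shows that $\xi,\xi'$ lie over the same $O_i$, hence in the common sheet $U_i$, so $\bar\alpha$ carries a sheet of the free covering onto itself and therefore $\bar\alpha=e$; then $\alpha=z^j$ and $t'-t=j\|z\|_\infty\in(-\|z\|_\infty,\|z\|_\infty)$ force $j=0$, so $(\xi,t)=(\xi',t')$ and $\pi|_V$ is injective. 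Finally $\pi(V)$ is saturated for the circle action on $N$ and maps onto $\bigcup_i O_i$ under the bundle projection $N\to B$, so it equals the preimage of that full-measure set; hence $N\setminus\pi(V)$ is the preimage of a set of measure zero under a fiber bundle projection, and $\vol_h\big(N\setminus\pi(V)\big)=0$.
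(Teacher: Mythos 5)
Your proof is correct, but it takes a genuinely different route from the paper's. The paper proves Lemma \ref{fundamental domain} by a direct cut-and-trim construction inside $\hat{N}$: $\|z\|_\infty$-periodicity gives a uniform $\varepsilon>0$ so that each box $B(x,\varepsilon)\times[0,\|z\|_\infty)\subseteq NH$ injects into $N$ (this is where Lemma \ref{primitive elements} enters), compactness gives finitely many such boxes whose images cover $N$, and the $U_i$ are then defined inductively by deleting from $W_i$ the finitely many deck translates of the closures of the earlier pieces, so that injectivity is built in and only boundary sets of measure zero are discarded. You instead pass to the quotient group $\bar{\Gamma}=\pi_1(N)/\langle z\rangle$ acting on the horosphere $H$ through the triangular form of the deck action, show this action is properly discontinuous and free, and harvest the $U_i$ as single sheets of the covering $H\to B=H/\bar{\Gamma}$ over a disjointified finite cover of the compact base, finishing with a Fubini argument over the circle fibration of $N$. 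Your route requires more structure up front --- in particular freeness of the $\bar{\Gamma}$-action, which you derive from primitivity --- but it buys a stronger picture: an honest fundamental domain adapted to a free circle action on $N$ with orbit space $B$, which the paper's more elementary trimming argument never produces (and never needs). One step you should state more carefully: in your torsion-freeness argument, the reduction from $\alpha^{k}=z^{m}$ to $\alpha^{k_1}=z^{m_1}$ with $d=\gcd(k,m)$ does not follow from torsion-freeness of $\pi_1(N)$ alone, since unique root extraction fails in general torsion-free groups (e.g.\ $(ab)^2=b^2$ in the Klein bottle group); it is valid here because $z$ is central, so $\alpha$ and $z$ commute, $(\alpha^{k_1}z^{-m_1})^{d}=\alpha^{k}z^{-m}=e$, and torsion-freeness then gives $\alpha^{k_1}=z^{m_1}$. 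The same commutativity is what justifies the identity $(z^{a}\alpha^{b})^{k_1}=z^{ak_1+bm_1}=z$ in your B\'ezout step, so both steps are sound once centrality is invoked explicitly; alternatively, one can avoid the algebra entirely by noting that any $\alpha$ whose image $\bar{\alpha}$ fixes a point of $H$ preserves the corresponding flow line, and the setwise stabilizer of that geodesic is an infinite cyclic group containing $z$, so primitivity forces it to equal $\langle z\rangle$.
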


\begin{proof}
    It follows from $\|z\|_\infty$-periodicity that there exists $\varepsilon > 0$ such that, for each $x \in H$, the exponential map is injective on $B(x,\varepsilon) \times [0,\|z_0\|_\infty) \subseteq N H$. By compactness, one may choose a finite subset $\{ W_1,\ldots,W_K \}$ of $\{ B(x,\varepsilon) \st x \in H \}$ with the property that $\{ \pi(W_i \times [0,\|z_0\|_\infty]) \}$ is an open cover of $N$. Let $U_1 = W_1$. The remaining $U_i$ are defined inductively: Suppose $U_1,\ldots,U_{i-1}$ have been defined and satisfy $U_j \subseteq W_j$. For each $1 \leq j \leq i-1$, there exist at most finitely many $\alpha_{j,1},\ldots,\alpha_{j,M_j} \in \pi_1(N)$ such that $\alpha_{j,m}(\hat{U}_j \times [0,\|z\|_\infty]) \cap W_i \neq \emptyset$. Let
    \[
        U_i = W_i \setminus \bigcup_{j=1}^{i-1} \bigcup_{m=1}^{M_j} \alpha_{j,m}(\hat{U}_j \times [0,\|z\|_\infty])\textrm{.}
    \]
    The sets $U_1,\ldots,U_K$ constructed in this way have the desired properties.
\end{proof}

\section{Integral formulations of $B$}

Let $z_0,z_1 \in Z(\pi_1(N))$. The function $F_{z_0 z_1} : N \to \real$ in Theorem \ref{splitting for subgroups when integrals vanish} will be defined by an equivariant construction on $\hat{N}$. Denote by $\Delta$ the Laplace--Beltrami operator. Fix $\hat{p} \in \hat{N}$ and $v_i = \hat{\omega}_{z_i}(\hat{p})$. For $x \in N$, let $\hat{x} \in \pi^{-1}(x)$ and $v = \hat{\omega}_{z_0}(\hat{x})$, and define $F_{z_0 z_1}(x)$ to be the average value of $b_{v_1} \Delta b_{v_0}$ along $\gamma_v|_{[0,1]}$ with respect to arclength:
\begin{equation}\label{busemann laplacian integral}
    F_{z_0 z_1}(x) = \oint_{\gamma_v|_{[0,1]}} b_{v_1} \Delta b_{v_0} ds\textrm{.}
\end{equation}
More specifically, $F_{z_0 z_1} = 0$ if $z_0$ is the identity, and
\[
    F_{z_0 z_1}(x) = \frac{1}{\|z_0\|_\infty} \int_{\gamma_v|_{[0,1]}} b_{v_1} \Delta b_{v_0} ds
\]
otherwise. To see that $F_{z_0 z_1}$ is well defined, first recall the following classical result.

\begin{lemma}[Green's identity]\label{green's identity}

Let $(M,g)$ be a $C^2$ Riemannian manifold with boundary $\partial M$ and $\nu$ an outward-pointing unit normal vector field along $\partial M$. If $\phi,\psi : M \to \real$ are $C^2$ functions, then
\[
\int_M \phi \Delta \psi \, d\vol_M + \int_M g(\nabla \phi,\nabla \psi) \, d\vol_M = \int_{\partial M}  \phi g(\nabla \psi, \nu) \, d\vol_{\partial M}\textrm{.}
\]
\end{lemma}

\noindent This generalizes in a straightforward way to manifolds with corners. A simple consequence is that, for a central Busemann function $b_v$, $\Delta b_v$ measures the change in volume of its horospheres under its gradient flow.

\begin{lemma}\label{derivative of area}
     Let $z \in Z(\pi_1(N))$ be nontrivial. Let $b_v$ be a corresponding central Busemann function and $T \in \real$. For each $t \in \real$, denote by $H_t$ the horosphere of $z$ along which $b_v = t$. Let $U \subset H_T$ be any open set with $C^r$ boundary and compact closure, and let $U_t = U \times [T,T+t]$ with respect to the splitting $\hat{N} \cong H_T \times \real$. Then
    \[
        \frac{d}{dt} \vol_{H_{T+t}}(U \times \{ T + t \}) = \frac{1}{\|z\|_\infty} \int_{U \times \{ T + t \}} \Delta b_v \, d\vol_{H_{T+t}}\textrm{.}
    \]
\end{lemma}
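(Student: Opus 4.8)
The plan is to compute the derivative of the volume functional by differentiating under the integral sign and identifying the resulting integrand via Green's identity applied to the slab region $U_t$. Write $\mathrm{vol}_{H_{T+t}}(U \times \{T+t\})$ as an integral over the fixed domain $U$ of the Jacobian of the gradient flow of $b_v$ composed with the induced metric on the horosphere. Because $z$ is central, the central Busemann function $b_v$ is $C^r$ with $r \geq 2$ and its gradient field is the unit-length lift $\hat\omega_z$ (up to the normalization by $\|z\|_\infty$ built into the generalized Busemann function), so the gradient flow is a genuine $C^{r-1}$ flow and the volume functional is differentiable in $t$. The first step is therefore to justify this differentiation and to observe that $\frac{d}{dt}\mathrm{vol}_{H_{T+t}}(U \times \{T+t\})$ equals the integral over $U \times \{T+t\}$ of the divergence of the unit normal field, i.e. the mean curvature of the horosphere with respect to the outward normal $\nabla b_v / \|\nabla b_v\|$.

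The second step is to express that mean curvature in terms of $\Delta b_v$. Since $b_v$ is, after rescaling, $\|z\|_\infty$ times a function with unit gradient, we have $\|\nabla b_v\| \equiv \|z\|_\infty$ on all of $\hat N$, so the unit normal to the horospheres is $N = \nabla b_v / \|z\|_\infty$, and its divergence is exactly $\Delta b_v / \|z\|_\infty$. Thus $\frac{d}{dt}\mathrm{vol}_{H_{T+t}}(U \times \{T+t\}) = \frac{1}{\|z\|_\infty}\int_{U\times\{T+t\}} \Delta b_v \, d\mathrm{vol}_{H_{T+t}}$. Alternatively, and this is the route suggested by the placement of Green's identity just above the statement, one applies Lemma \ref{green's identity} to the slab $U_t = U \times [T, T+t]$ with $\phi \equiv 1$ and $\psi = b_v$: the left side becomes $\int_{U_t} \Delta b_v \, d\mathrm{vol}$, and the right side is the flux of $\nabla b_v$ through $\partial U_t$. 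The boundary consists of the two horosphere caps $U \times \{T\}$ and $U \times \{T+t\}$, where the outward normals are $\mp N$, and the lateral piece $(\partial U) \times [T,T+t]$, where $\nabla b_v = \|z\|_\infty N$ is tangent to the boundary so the flux contribution vanishes. Hence $\int_{U_t}\Delta b_v\,d\mathrm{vol} = \|z\|_\infty\big(\mathrm{vol}_{H_{T+t}}(U\times\{T+t\}) - \mathrm{vol}_{H_T}(U)\big)$; using the coarea formula to rewrite $\int_{U_t}\Delta b_v\,d\mathrm{vol}$ as $\int_0^t \int_{U\times\{T+s\}} \Delta b_v\, d\mathrm{vol}_{H_{T+s}}\,ds$ (valid because $\|\nabla b_v\|$ is the constant $\|z\|_\infty$) and differentiating in $t$ gives the claimed formula.

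The main obstacle is regularity and the justification of the coarea/Fubini splitting and the differentiation in $t$. Arbitrary Busemann functions are only $C^1$, but here $b_v$ is central, so by the remarks in Section 2 it is $C^r$ with $r \geq 2$; this is exactly what is needed both to apply Green's identity (Lemma \ref{green's identity}, which requires the $C^2$ category) on the slab $U_t$ — or rather on the manifold-with-corners $U_t$, using the stated extension of Green's identity to corners since $U$ has $C^r$ boundary — and to know that the horospheres form a $C^r$ foliation so that the gradient flow identification $\hat N \cong H_T \times \real$ is as smooth as asserted. One must also check that $U_t$ has compact closure so the integrals are finite: this follows since $\bar U$ is compact and $[T,T+t]$ is compact, and the flow map is continuous. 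With these points in hand, the rest is the routine computation above; I would present the Green's identity argument as the main line, since it avoids differentiating the Jacobian directly and makes the role of the constant $\|z\|_\infty$ transparent.
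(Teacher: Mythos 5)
Your proposal is correct and its main line — Green's identity with $\phi\equiv 1$ and $\psi=b_v$ on the slab $U_t$, the flux computation using $\nabla b_v=\hat{\omega}_z$ of constant norm $\|z\|_\infty$ (caps contributing $\pm\|z\|_\infty$ times the horosphere areas, lateral boundary contributing nothing), then the coarea formula and differentiation in $t$ — is exactly the paper's proof. The first-variation/mean-curvature alternative you sketch is a fine reformulation but is not needed, as you yourself conclude.
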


\begin{proof}
    Denote by $\nu$ the outward-pointing unit normal vector field along the $C^r$ portion of $\partial U_t$, so that, except at corners, $\nu = -\hat{\omega}_z/\|z\|_\infty$ along $U \times \{ T \}$, $\nu = \hat{\omega}_z/\|z\|_\infty$ along $U \times \{ T + t \}$, and $\hat{h}(\nu,\hat{\omega}_z) = 0$ along $\partial U \times [0,\|z\|_\infty]$. By Green's identity,
    \begin{align*}
        \int_{U_t} \Delta b_v \, d\vol_{\hat{N}} &= \int_{\partial U_t} \hat{h}(\hat{\omega}_z,\nu) \, d\vol_{\partial U_t}\\
        &= \|z\|_\infty [ \vol_{H_{T+t}}(U \times \{ T + t \}) - \vol_{H_T}(U \times \{ T \}) ]\textrm{.}
    \end{align*}
    It follows from the coarea formula that
    \[
        \int_{U_t} \Delta b_v \, d\vol_{\hat{N}} = \int_T^{T+t} \int_{U \times \{ T + s \}} \Delta b_v d \vol_{H_{T+s}} ds\textrm{.}
    \]
    The result follows immediately.
\end{proof}

\noindent If $\hat{x}' \in \pi^{-1}(x)$ and $v' = \hat{\omega}_{z_0}(\hat{x}')$, then $\hat{x}' = \alpha(x)$ for some $\alpha \in \pi_1(N)$. Note that, for all $t$, $\Delta b_{v_0}(\gamma_v (t)) = \Delta b_{v_0}(\gamma_{v'}(t))$ and, by Lemma \ref{constant change in busemann functions}, $b_{v_1}(\gamma_v (t)) - b_{v_1}(\gamma_{v'}(t)) = B(\alpha,z_1) \in \real$. Moreover, for $T = b_{v_0}(\hat{x})/{\|z_0\|_\infty^2}$, $U^\varepsilon = B(\gamma_v(-T),\varepsilon) \subseteq H_0$, and $U_{\|z_0\|_\infty}^\varepsilon = U^\varepsilon \times [T,T+\|z_0\|_\infty]$, an application of Lemma  \ref{derivative of area} shows that
\begin{align*}
    \int_{\gamma_v|_{[0,1]}} \Delta b_{v_0} dt &= \lim_{\varepsilon \to 0} \int_{U^\varepsilon_{\|z_0\|_\infty}} \Delta b_{v_0} \, d\vol_{\hat{N}}\\
    &= \lim_{\varepsilon \to 0} \|z_0\|_\infty [ \vol_{H_{T+\|z_0\|_\infty}}(U^\varepsilon \times \{ T+\|z_0\|_\infty \}) - \vol_{H_T}(U^\varepsilon \times \{ T \}) ]\\
    &= 0\textrm{.}
\end{align*}
Thus
\begin{align*}
    \int_{\gamma_{v'}|_{[0,1]}} b_{v_1} \Delta b_{v_0} dt &= \int_{\gamma_v|_{[0,1]}} [b_{v_1} + B(\alpha,z_1)] \Delta b_{v_0} ds\\
    &= \int_{\gamma_v|_{[0,1]}} b_{v_1} \Delta b_{v_0} ds + B(\alpha,z_1) \int_{\gamma_v|_{[0,1]}} \Delta b_{v_0} ds\\
    &= \int_{\gamma_v|_{[0,1]}} b_{v_1} \Delta b_{v_0} ds\textrm{.}
\end{align*}
So $F_{z_0 z_1}$ is well defined. Similarly, $F_{z_0 z_1}$ is independent of the choice of $\hat{p}$, as replacing $\hat{p}$ with $\hat{q}$ induces a constant change in $b_{v_1}$ of $b_{v_1}(\hat{q}) - b_{v_1}(\hat{p})$ while leaving $\Delta b_{v_0}$ unchanged.

For any $\hat{p}$ as above, let $S$ be a fundamental domain of $\pi$ constructed using a horosphere $H_T$ of $v_0$ as in Lemma \ref{fundamental domain}, and write $S_t = S \times [t,t+\|z_0\|_\infty]$ for each $t \in [T,T+\|z_0\|_\infty]$. Then
\begin{equation}\label{busemann laplacian integral over N}
\begin{aligned}
    \int_N F_{z_0 z_1} \, d\vol_N &= \int_{S_T} \int_{\gamma_{\hat{\omega}_{z_0}(\hat{x})}|_{[0,1]}} b_{v_1} \Delta b_{v_0} ds \,\, d\vol_{\hat{N}}\\
    &= \int_T^{T+\|z_0\|_\infty} \int_{S \times \{ t \}} \int_{\gamma_{\hat{\omega}_{z_0}(\hat{x})}|_{[0,1]}} b_{v_1} \Delta b_{v_0} ds \,\, d\vol_{H_t} \,dt\\
    &= \int_T^{T+\|z_0\|_\infty} \int_{S_t} b_{v_1} \Delta b_{v_0} \, d\vol_{\hat{N}} \,dt\textrm{.}
\end{aligned}
\end{equation}

\begin{theorem}\label{green's identity integral}
    Let $z_0,z_1 \in Z(\pi_1(N))$. Then
    \[
        B(z_0,z_1) = \frac{1}{\vol(N)} \int_N [ h(\omega_{z_0},\omega_{z_1}) + F_{z_0 z_1} ] \, d\vol_N\textrm{.}
    \]
\end{theorem}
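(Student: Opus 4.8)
The plan is to compute the right-hand side by applying Green's identity to the functions $b_{v_1}$ and $b_{v_0}$ on the fundamental domain $S_t$ appearing in \eqref{busemann laplacian integral over N}, then average over $t \in [T, T+\|z_0\|_\infty]$. Starting from the last line of \eqref{busemann laplacian integral over N}, Green's identity (Lemma \ref{green's identity}, in its version for manifolds with corners) applied on $S_t$ gives
\[
    \int_{S_t} b_{v_1} \Delta b_{v_0} \, d\vol_{\hat{N}} = \int_{\partial S_t} b_{v_1} \hat{h}(\hat{\omega}_{z_0}, \nu) \, d\vol_{\partial S_t} - \int_{S_t} \hat{h}(\hat{\omega}_{z_1}, \hat{\omega}_{z_0}) \, d\vol_{\hat{N}}\textrm{,}
\]
using that $\nabla b_{v_0} = \hat{\omega}_{z_0}$ and $\nabla b_{v_1} = \hat{\omega}_{z_1}$. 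The volume term integrates over $t$ to give $\|z_0\|_\infty$ times $\int_{S} \hat{h}(\hat{\omega}_{z_1}, \hat{\omega}_{z_0})\, d\vol$, which after dividing by $\|z_0\|_\infty \cdot \vol(N) \cdot \vol(N)^{-1}$... more precisely it contributes exactly $-\int_N h(\omega_{z_0}, \omega_{z_1})\, d\vol_N$ to the total once one accounts for the $t$-average; I would verify this bookkeeping carefully, since $\vol(N) = \|z_0\|_\infty \vol_{H_T}(S)$ via the coarea formula.

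The crux is the boundary term. The boundary $\partial S_t$ consists of two "horospherical caps" $S \times \{t\}$ and $S \times \{t + \|z_0\|_\infty\}$, where $\nu = \mp \hat{\omega}_{z_0}/\|z_0\|_\infty$, together with the "side" faces lying in $\partial S \times [\,\cdot\,]$, where $\hat{h}(\hat{\omega}_{z_0}, \nu) = 0$ since $\nu$ is tangent to the horospheres. So only the two caps contribute, and there
\[
    \int_{\partial S_t} b_{v_1} \hat{h}(\hat{\omega}_{z_0}, \nu)\, d\vol = \|z_0\|_\infty \Big[ \int_{S \times \{t + \|z_0\|_\infty\}} b_{v_1}\, d\vol - \int_{S \times \{t\}} b_{v_1}\, d\vol \Big]\textrm{.}
\]
The second integral is over the image of the first cap under the gradient flow of $\hat{\omega}_{z_0}$ for time $\|z_0\|_\infty$; since $\gamma_{\hat{\omega}_{z_0}(\hat{x})}|_{[0,1]}$ projects to a closed loop representing $z_0$, this flow is, up to the deck transformation $z_0$, the identity on the fundamental domain. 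Using Lemma \ref{constant change in busemann functions}, $b_{v_1}(z_0(\hat{y})) - b_{v_1}(\hat{y}) = B(z_0, z_1)$ for every $\hat{y}$, and this constant can be pulled out, so the bracketed difference equals $B(z_0, z_1) \vol_{H_{t+\|z_0\|_\infty}}(S \times \{t + \|z_0\|_\infty\})$ — with the caveat that I must also account for the (vanishing, by the computation just above \eqref{busemann laplacian integral over N} or by Lemma \ref{derivative of area}) change in the horospherical volume itself, so that after integrating in $t$ one gets exactly $B(z_0, z_1) \cdot \|z_0\|_\infty \cdot \vol_{H_T}(S) \cdot \|z_0\|_\infty / \|z_0\|_\infty$, i.e. $B(z_0, z_1)\vol(N)$ after the normalization.

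Assembling the two contributions and dividing by $\vol(N)$ yields
\[
    \frac{1}{\vol(N)} \int_N F_{z_0 z_1}\, d\vol_N = B(z_0, z_1) - \frac{1}{\vol(N)} \int_N h(\omega_{z_0}, \omega_{z_1})\, d\vol_N\textrm{,}
\]
which rearranges to the claim. The main obstacle is the careful accounting at the corners of $S_t$ and the justification that Green's identity applies despite the fundamental domain being only a set of the form $\cup_i U_i \times [0, \|z_0\|_\infty)$ with $\vol_h(N \setminus \pi(V)) = 0$ rather than a smooth domain; this is handled by working on each $U_i \times [t, t+\|z_0\|_\infty]$ separately, applying the corners version of Lemma \ref{green's identity}, and summing, noting that the interior side-faces cancel in pairs or contribute zero because $\hat{h}(\hat{\omega}_{z_0}, \nu) = 0$ there, and that the measure-zero omitted set does not affect any of the integrals. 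The identification $b_{v_1}(z_0(\hat{y})) = b_{v_1}(\hat{y}) + B(z_0, z_1)$ is exactly what makes the boundary term collapse to a multiple of $B(z_0,z_1)$, so Lemma \ref{constant change in busemann functions} is doing the essential work.
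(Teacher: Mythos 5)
Your proposal follows essentially the same route as the paper's proof: apply Green's identity (in its version for corners) to $b_{v_1}$ and $b_{v_0}$ on the horospherical fundamental domain $S_t$, note that only the two horospherical caps contribute to the boundary term, collapse that term to $\|z_0\|_\infty B(z_0,z_1)\vol_{H_t}(S \times \{t\})$ via Lemma \ref{constant change in busemann functions}, integrate in $t$ over one period, and combine with \eqref{busemann laplacian integral over N}. The structure and the key use of Lemma \ref{constant change in busemann functions} are exactly as in the paper.

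Three points, however, where your write-up falls short of what the paper actually does. First, the fundamental domain of the form $S \times [0,\|z_0\|_\infty)$ that you borrow from \eqref{busemann laplacian integral over N} only exists when $z_0$ is primitive: Lemma \ref{fundamental domain} is stated for primitive central elements, and if $z_0 = w^m$ with $|m|>1$ then $U \times [0,\|z_0\|_\infty)$ projects $m$-to-one rather than injectively. The paper closes this by invoking Theorem \ref{virtual splitting} to pass to a finite cover $\tilde{N}$ in which $z_0$ corresponds to a primitive central element, observing that the normalized integrals involved are unchanged under the finite cover; your argument needs this reduction (or a substitute) to handle non-primitive $z_0$. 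Second, your bookkeeping identity $\vol(N) = \|z_0\|_\infty \vol_{H_T}(S)$ is false in general: the slice volumes $\vol_{H_t}(S \times \{t\})$ are $\|z_0\|_\infty$-periodic but need not be constant in $t$, since by Lemma \ref{derivative of area} their derivative is $\frac{1}{\|z_0\|_\infty}\int \Delta b_{v_0}$, which is not assumed to vanish. The repair is the one the paper uses: keep $\vol_{H_t}(S \times \{t\})$ inside the $t$-integral, so that $\int_T^{T+\|z_0\|_\infty} \vol_{H_t}(S \times \{t\})\,dt = \vol(N)$ by the coarea formula, yielding the factor $B(z_0,z_1)\vol(N)$ with no constancy assumption. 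Third, the sets $U_i$ produced by Lemma \ref{fundamental domain} need not have $C^r$ boundary, so ``apply the corners version on each piece and sum'' does not by itself justify Green's identity there; the paper handles this by exhausting each $U_i$ by open subsets with $C^r$ boundary whose measures converge to that of $U_i$, and your proof should include that approximation step.
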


\begin{proof}
    By Theorem \ref{virtual splitting}, there exist a finite covering map $\psi : \tilde{N} \to N$, with covering metric $\tilde{h}$, and a primitive $\tilde{z}_0 \in Z \big( \pi_1(\tilde{N}) \big)$ such that $\psi_*(\tilde{z}_0) = z_0$. For each $i = 1,2$, denote by $\tilde{\omega}_i$ the lift of $\omega_i$ to $\tilde{N}$. Then
    \[
        \frac{1}{\vol(N)} \int_N h(\omega_{z_0},\omega_{z_1}) \, d\vol_N = \frac{1}{\vol_{\tilde{h}}(\tilde{N})} \int_{\tilde{N}} \tilde{h}(\tilde{\omega}_0,\tilde{\omega}_1) d \vol_{\tilde{h}}\textrm{.}
    \]
    Fix $\hat{x} \in \hat{N}$ and $v_i = \hat{\omega}_{z_i}(\hat{x})$. Let $T \in \real$. For the horospheres $H_t$ of $z_0$, let $U_1,\ldots,U_K$ be the open subsets of $H_T$ obtained by applying Lemma \ref{fundamental domain} to the covering $\hat{N} \to \tilde{N}$, and write $S = \cup_{i=1}^K U_i$ and $S_t = S \times [t,t+\|z\|_\infty]$. If each $U_i$ has $C^r$ boundary, then Green's identity shows that
    \begin{equation}\label{green's identity for h}
        \int_{S_t} \hat{h}(\hat{\omega}_{z_0},\hat{\omega}_{z_1}) \, d\vol_{\hat{N}} = \int_{\partial S_t} b_{v_1} \hat{h}(\hat{\omega}_{z_0},\nu) \, d\vol_{\partial S_t} - \int_{S_t} b_{v_1} \Delta b_{v_0} \, d\vol_{\hat{N}}\textrm{.}
    \end{equation}
    Applying Lemma \ref{constant change in busemann functions}, one has that
    \begin{align*}
        \int_{\partial S_t} b_{v_1} \hat{h}(\hat{\omega}_{z_0},\nu) d \vol_{\partial S_t} &= \|z_0\|_\infty \big[ \int_{S \times \{ t + \|z_0\|_\infty \}} b_{v_1} d \vol_{H_{\|z_0\|_\infty}} - \int_{S \times \{ t \}} b_{v_1} d \vol_{H_0} \big]\\
        &= \|z_0\|_\infty B(z_0,z_1)\vol_{H_t}(S \times \{ t \})\textrm{.}
    \end{align*}
    Suppose that $z_0$ is not the identity, as the result holds otherwise. Substituting the above into equation \eqref{green's identity for h}, integrating both sides from $T$ to $T + \|z_0\|_\infty$, and using the fact that $\int_{S_t} \hat{h}(\hat{\omega}_{z_0},\hat{\omega}_{z_1}) \, d\vol_{\hat{N}} =  \int_{\tilde{N}} \tilde{h}(\tilde{\omega}_0,\tilde{\omega}_1) d \vol_{\tilde{h}}$ yields
    \begin{equation}
    \begin{aligned}
        B(z_0,z_1) &= \frac{1}{\vol(N)} \int_N h(\omega_{z_0},\omega_{z_1}) \, d\vol_N\\
        &\qquad \qquad \quad+ \frac{1}{\|z_0\|_\infty \vol(N)} \int_T^{T+\|z_0\|_\infty} \int_{S_t} b_{v_1} \Delta b_{v_0} \, d\vol_{\hat{N}} dt\\
        &= \frac{1}{\vol(N)} \int_N h(\omega_{z_0},\omega_{z_1}) \, d\vol_N + \frac{1}{\vol(N)} \int_N F_{z_0 z_1} \, d\vol_N\textrm{,}
    \end{aligned}
    \end{equation}
    the latter equality following from equation \eqref{busemann laplacian integral over N}. In the general case, one may apply a similar argument to a union of open sets $U_{i,m} \subseteq U_i$ that have $C^r$ boundary and whose measures converge to that of $U_i$ as $m \to \infty$.
\end{proof}

\begin{corollary}\label{integrate with respect to h}
    Let $z_0,z_1 \in Z(\pi_1(N))$. Then the following hold:\\
    (a) $B(z_0,z_1) = B(z_1,z_0)$ if and only if $\int_N F_{z_0 z_1} \, d\vol_N = \int_N F_{z_1 z_0} \, d\vol_N$;\\
    (b) $B(z_0,z_1) = \frac{1}{\vol(N)} \int_N h(\omega_{z_0},\omega_{z_1}) \, d\vol_N$ if and only if $\int_N F_{z_0 z_1} \, d\vol_N = 0$.
\end{corollary}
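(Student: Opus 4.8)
The plan is to obtain both equivalences as immediate formal consequences of Theorem \ref{green's identity integral}, which already expresses $B(z_0,z_1)$ as the average over $N$ of $h(\omega_{z_0},\omega_{z_1}) + F_{z_0 z_1}$. Everything then reduces to linearity of the integral, division by the positive constant $\vol(N)$, and the pointwise symmetry of the metric tensor.

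For part (b), I would simply rearrange the identity in Theorem \ref{green's identity integral} as
\[
    B(z_0,z_1) - \frac{1}{\vol(N)} \int_N h(\omega_{z_0},\omega_{z_1}) \, d\vol_N = \frac{1}{\vol(N)} \int_N F_{z_0 z_1} \, d\vol_N\textrm{,}
\]
so that the left-hand side vanishes exactly when $\int_N F_{z_0 z_1} \, d\vol_N = 0$. For part (a), I would apply Theorem \ref{green's identity integral} to the ordered pair $(z_0,z_1)$ and again to $(z_1,z_0)$ and subtract. Since $h$ is symmetric, $h(\omega_{z_0}(x),\omega_{z_1}(x)) = h(\omega_{z_1}(x),\omega_{z_0}(x))$ for every $x \in N$, so the two $h$-integrals coincide and cancel, leaving
\[
    B(z_0,z_1) - B(z_1,z_0) = \frac{1}{\vol(N)} \int_N [ F_{z_0 z_1} - F_{z_1 z_0} ] \, d\vol_N\textrm{,}
\]
whence $B(z_0,z_1) = B(z_1,z_0)$ precisely when $\int_N F_{z_0 z_1} \, d\vol_N = \int_N F_{z_1 z_0} \, d\vol_N$.

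There is no real obstacle here: the substantive work—Green's identity, the fundamental-domain construction of Lemma \ref{fundamental domain}, and the approximation by open sets with $C^r$ boundary—is all carried out in the proof of Theorem \ref{green's identity integral}. The only point worth checking explicitly is that, in the two applications needed for part (a), $F_{z_0 z_1}$ and $B$ are the objects appearing in the statement, i.e.\ that the construction of $F_{z_0 z_1}$ is genuinely independent of the auxiliary choices (the basepoint $\hat{p}$ and the fundamental domain) but is \emph{not} assumed symmetric in the pair $(z_0,z_1)$; this is exactly what was verified in the discussion preceding Theorem \ref{green's identity integral}, so nothing further is required.
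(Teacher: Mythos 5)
Your proof is correct and is exactly the intended argument: the paper treats Corollary \ref{integrate with respect to h} as an immediate consequence of Theorem \ref{green's identity integral}, with part (b) a rearrangement of that identity and part (a) following from applying it to both ordered pairs and cancelling the $h$-integrals by the pointwise symmetry of the metric. Nothing further is needed.
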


Let $Z$ be any subgroup of $Z(\pi_1(N))$ of rank $k$, and fix an isomorphism $D : Z \to \integer^k$. Denote by $e_1,\ldots,e_k$ the standard basis for $\real^k$, and let $w_i = D^{-1}(e_i)$. Let $f : \tor^k \to N$ be any fixed $C^1$ map satisfying $f_*(\alpha_i) = w_i$, where $\alpha_1,\ldots,\alpha_k$ are generators for $\pi_1(\tor^k)$. Since $N$ is aspherical, and therefore an Eilenberg--Mac Lane space, such maps are guaranteed to exist, although in the case of no conjugate points they may be constructed more concretely by iteratively exponentiating around loops (see Section 3.3 of \cite{Dibble2019}).

\begin{theorem}\label{integrate with respect to g}
    Let $z_0,z_1 \in Z$ and $\beta_1 = f^*(\omega_{z_1}^\flat)$ (i.e., $\beta_1(v) = h \big( \omega_{z_1},f_*(v) \big)$ for all $v \in T \tor^k$). Then $B(z_0,z_1) = \int_{\tor^k} \beta_1 \circ D(z_0) \, d\vol_g$ for the standard flat metric $g$ on $\tor^k$.
\end{theorem}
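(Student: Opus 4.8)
The plan is to transfer the ``change in the Busemann function of $z_1$'' interpretation of $B(z_0,z_1)$ up to $\tor^k$ via a lift of $f$, and then to average a line-integral formula over a fundamental domain of the standard covering $p : \real^k \to \tor^k$. First I would choose a lift $\hat f : \real^k \to \hat N$ of $f \circ p$. Because a lift intertwines deck transformations through $f_*$, and $f_*$ carries the translation lattice $\integer^k \cong \pi_1(\tor^k)$ onto $Z$ by $\mathbf m \mapsto D^{-1}(\mathbf m)$, one gets $\hat f(x + \mathbf m) = \big(D^{-1}(\mathbf m)\big)\big(\hat f(x)\big)$ for all $x \in \real^k$ and $\mathbf m \in \integer^k$. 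Now fix a central Busemann function $b$ of $z_1$; by the fact recalled above that such functions have gradient field $\hat\omega_{z_1}$, the $C^1$ function $u := b \circ \hat f : \real^k \to \real$ satisfies $du = \hat f^*(\hat\omega_{z_1}^\flat) = p^*\beta_1$, a $\integer^k$-periodic $1$-form on $\real^k$; here one uses $\hat\omega_{z_1}^\flat = \pi^*(\omega_{z_1}^\flat)$ (as $\pi$ is a local isometry) and $\pi \circ \hat f = f \circ p$. Writing $\mathbf m = D(z_0)$, the equivariance of $\hat f$, Lemma \ref{constant change in busemann functions}, and the definition of $B$ give, for \emph{every} $x \in \real^k$,
\[
u(x + \mathbf m) - u(x) = b\big(z_0(\hat f(x))\big) - b\big(\hat f(x)\big) = B(z_0,z_1)\textrm{.}
\]

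Integrating $du = p^*\beta_1$ along the segment from $x$ to $x + \mathbf m$ then yields, for every $x$,
\[
B(z_0,z_1) = \int_0^1 (p^*\beta_1)\big|_{x + t\mathbf m}(\mathbf m)\, dt\textrm{.}
\]
I would integrate this identity over $x$ in the unit cube $[0,1]^k$, which has volume $1$, so the constant left-hand side is unaffected; after Fubini, each inner integral is handled by the measure-preserving torus translation $x \mapsto x + t\mathbf m$, where the $\integer^k$-periodicity of $p^*\beta_1$ is used, and the $t$-variable disappears, leaving
\[
B(z_0,z_1) = \int_{[0,1]^k} (p^*\beta_1)\big|_x(\mathbf m)\, dx = \int_{\tor^k} \beta_1 \circ D(z_0)\, d\vol_g\textrm{,}
\]
since $d\vol_g$ is Lebesgue measure on $[0,1]^k$ and $\beta_1 \circ D(z_0)$ is the function on $\tor^k$ obtained by feeding the parallel vector field $D(z_0)$ into $\beta_1$. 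Regularity is not an issue, since $f$ is $C^1$ and $\omega_{z_1}$ is $C^{r-1}$, so $\beta_1$ is continuous, while $b$ is $C^r$, so $u$ is $C^1$ and $du$ is continuous.

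The one delicate point is the averaging step: it is essential that the line-integral identity hold for every base point $x$ and not merely for $x = 0$, which is exactly what Lemma \ref{constant change in busemann functions} provides, and then periodicity converts an integral over the closed geodesic representing $D(z_0)$ into one over all of $\tor^k$. Should that be awkward to present cleanly, an alternative is to set $a_i = B(w_i,z_1)$ and note that $u(x) - \sum_i a_i x_i$ is $\integer^k$-periodic, hence descends to $\tor^k$; thus $\beta_1 - \sum_i a_i\, dx_i$ is exact on $\tor^k$ and so integrates to $0$ against the parallel field $D(z_0) = (m_1,\dots,m_k)$, giving $\int_{\tor^k} \beta_1 \circ D(z_0)\, d\vol_g = \sum_i m_i B(w_i,z_1)$, which equals $B(z_0,z_1)$ by additivity of $B$ in its first slot (Corollary 4.2 of \cite{IvanovKapovitch2014}) together with $z_0 = \sum_i m_i w_i$.
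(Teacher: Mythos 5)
Your main argument is correct and is essentially the paper's proof: both lift $f$ to $\real^k$, apply the fundamental theorem of calculus along segments in the direction $D(z_0)$ so that each line integral equals $B(z_0,z_1)$ by Lemma \ref{constant change in busemann functions} and the gradient identity $\nabla b_{v_1} = \hat{\omega}_{z_1}$, and then average over a fundamental domain of $\real^k \to \tor^k$; the paper organizes the averaging via the coarea formula on a parallelepiped with one edge $D(z_0)$ (hence its $k \geq 2$ reduction and the count of fundamental domains), whereas your Fubini-plus-translation-invariance step over the unit cube accomplishes the same thing uniformly in $k$. Your closing alternative also works, but note that it additionally invokes additivity of $B$ in its first slot (Corollary 4.2 of Ivanov--Kapovitch), which neither your main argument nor the paper's proof requires.
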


\begin{proof}
    Suppose $k \geq 2$, as otherwise the result is clear. Denote by $\phi : \real^k \to \tor^k$ the covering map that quotients by $\integer^k$. Let $V = D(z_0) = \sum_{i=1}^m v_i e_i$ be a constant vector field on $\tor^k$. Suppose $V \neq 0$, so that some $v_i \neq 0$. Let $u_i = V$ and, for each $j \neq i$, let $u_j$ be the vector in $\real^k$ whose only nonzero entries are $-v_j$ in the $i$-th component and $v_i$ in the $j$-th component. Let $P$ be the parallelepiped in $\real^k$ determined by $u_1,\ldots,u_k$. Then $P$ is the union of $v_i^{k-2} \sum_{j=1}^k v_j^2 = v_i^{k-2}\|V\|_{\real^k}^2$ fundamental domains of $\phi$. Denote by $Q$ the face of $P$ that contains the origin but not $u_i$, and, for each $p \in Q$, denote by $\alpha_p$ the geodesic $t \mapsto p + tV/\|V\|_{\real^k}$.

    The map $f$ lifts to a map $F : \real^k \to \hat{N}$ such that $f \circ \phi = \pi \circ F$. Let $\hat{V} = \phi^{-1}(V)$, so that
    \[
        h \big( \omega_{z_1},f_*(V) \big) = \hat{h} \big( \hat{\omega}_{z_1},F_*(\hat{V}) \big) = \hat{h} \big( \hat{\omega}_{z_1}, F_*(\alpha_x'(t)) \big)\textrm{.}
    \]
    The coarea formula implies that
    \begin{align*}
        \int_{\tor^k} h \big(  \omega_{z_1},f_*(V) \big) d \vol_g &= \frac{1}{v_i^{k-2}\|V\|_{\real^k}} \int_Q \Big[ \int_0^{\|V\|_{\real^k}} \hat{h} \big( \hat{\omega}_{z_1}, F_*(\alpha_x'(t)) \big) dt \Big] d \vol_Q\\
        &= \frac{1}{v_i^{k-2}\|V\|_{\real^k}} \int_Q [b_{z_1} \circ F(x + V) \big) - b_{z_1} \circ F(x)] d \vol_Q\\
        &= \frac{1}{v_i^{k-2}\|V\|_{\real^k}} \int_Q [b_{z_1} \big( z_0(F(x)) \big) - b_{z_1}(F(x))] d \vol_Q\\
        &= \frac{1}{v_i^{k-2}\|V\|_{\real^k}} \int_Q B(z_0,z_1) d \vol_Q\\
        &= B(z_0,z_1)\textrm{.}
    \end{align*}
    If $V = 0$, then $z_0$ is the identity, and one obtains the same equality.
\end{proof}

\begin{remark}
    For $N = \tor^k$ and $z \in \integer^k$, $z = D(\omega_z) = \int_{\tor^k} \omega_z \, d\vol_g$, where $g$ is the standard flat metric on $\tor^k$. This continuously extends the direction at infinity in \cite{BuragoIvanov1994} to the leaves of the Heber foliation \cite{Heber1994} of the unit sphere bundle of $\tor^k$.
\end{remark}

\section{Proof of Theorems \ref{linear splitting for subgroups} and \ref{splitting for subgroups when integrals vanish}}

As before, let $Z$ be a subgroup of $Z(\pi_1(N))$, fix an isomorphism $D : Z \to \integer^k$, and let $w_i = D^{-1}(e_i)$, so that $w_1,\ldots,w_k$ generate $Z$. Write $\mathscr{B}_0 = \{ \omega_z \st z \in Z \}$, and denote by $\mathscr{B}$ the vector space of finite formal linear combinations of elements of $\mathscr{B}_0$ with real coefficients. (For each $\hat{x} \in \hat{N}$, $\mathscr{B}$ may be identified with the space of finite formal combinations of Busemann functions in $Z$ that vanish at $\hat{x}$.) Roughly speaking, one may extend $D$ to a linear direction at infinity $\mathscr{D} : \mathscr{B} \to \real^k$ by setting $\mathscr{D}(\sum_i a_i \omega_{z_i}) = \sum_i a_i D(z_i)$. In the other direction, there is the inclusion $\iota : \integer^k \to \mathscr{B}$ defined by $\iota(\sum_i m_i e_i) = \omega_{\sum_i m_i w_i}$.

For each $\zeta_1,\zeta_2 \in \mathscr{B}$ (i.e., $\zeta_i = \sum_j a_{ij} \omega_{z_{ij}}$ for $z_{ij} \in Z$ and $a_{ij} \in \real$), let $\beta_i = f^*(\zeta_i^\flat)$. Define
\[
    G(\zeta_1,\zeta_2) = \frac{1}{2} \int_{\tor^k} [ \beta_1 \circ \mathscr{D}(\zeta_2) + \beta_2 \circ \mathscr{D}(\zeta_1) ] d\mu_g\textrm{.}
\]
It is clear that $G$ is symmetric and bilinear. By Corollary \ref{integrate with respect to h}(a) and Theorem \ref{integrate with respect to g}, when (i) holds,
\[
    G(\omega_{z_0},\omega_{z_1}) = B(z_0,z_1) = B(z_1,z_0)\textrm{.}
\]
At the same time, one may define a semi-inner product $H$ on $\mathscr{B}$ by
\[
    H(\zeta_1,\zeta_2) = \frac{1}{\vol(N)} \int_N h(\zeta_1,\zeta_2) \, d\vol_N\textrm{.}
\]
Again assuming (i), Corollary \ref{integrate with respect to h}(b) implies that
\[
    H(\omega_{z_0},\omega_{z_1}) = B(z_0,z_1) = B(z_1,z_0)\textrm{.}
\]
In this case, since they agree on $\mathscr{B}_0 \times \mathscr{B}_0$, $G$ and $H$ define the same semi-inner product on $\mathscr{B}$ and, consequently, the same semi-norm. Overloading notation, write $\|\cdot\|_\infty = \|\cdot\|_G = \|\cdot\|_H$.

\begin{lemma}\label{constant inner products}
    Let $Z$ be a subgroup of $Z(\pi_1(N))$ such that $\omega_{\sum_i m_i z_i} = \sum_i m_i \omega_{z_i}$ for all $z_i \in Z$ and all $m_i \in \integer$. Then, for all $z_0,z_1 \in Z$, the following hold:\\
    (a) $[\omega_{z_0},\omega_{z_1}] = 2\nabla_{\omega_{z_0}} \omega_{z_1}$ (i.e., $\nabla_{\omega_{z_0}} \omega_{z_1} = - \nabla_{\omega_{z_1}} \omega_{z_0}$);\\
    (b) $h([\omega_{z_0},\omega_{z_1}],\omega_{z_0}) = h([\omega_{z_0},\omega_{z_1}],\omega_{z_1}) = 0$;\\
    (c) $h(\omega_{z_0}(x),\omega_{z_1}(x)) = B(z_0,z_1)$ for all $x \in N$.
\end{lemma}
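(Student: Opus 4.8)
The plan is to deduce all three statements from two facts about the fields $\omega_z$, $z \in Z(\pi_1(N))$, recalled in Section 2: each $\omega_z$ has constant length, and on $\hat N$ it equals $\grad b_v$ for a central Busemann function $b_v$, so locally on $N$ it is a gradient and its covariant differential $\nabla \omega_z$ is the (symmetric) Hessian of $b_v$. Differentiating $\|\omega_z\|^2 \equiv \mathrm{const}$ gives $h(\nabla_X \omega_z, \omega_z) = 0$ for all $X$, and self-adjointness of $\nabla \omega_z$ then yields $h(\nabla_{\omega_z}\omega_z, X) = h(\nabla_X \omega_z, \omega_z) = 0$ for all $X$, i.e.\ $\nabla_{\omega_z}\omega_z = 0$. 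Equivalently, the integral curve of $\hat\omega_z$ through a point is a constant-speed geodesic, namely the corresponding axis $\gamma_{\hat\omega_z(\cdot)}$.

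For (a): since $Z$ is a group, $z_0 + z_1 \in Z$, and the hypothesis gives $\omega_{z_0+z_1} = \omega_{z_0} + \omega_{z_1}$. I would apply $\nabla_{\omega_{z_0+z_1}}\omega_{z_0+z_1} = 0$ and expand using bilinearity of $\nabla$; the two diagonal terms $\nabla_{\omega_{z_i}}\omega_{z_i}$ vanish by the previous paragraph, leaving $\nabla_{\omega_{z_0}}\omega_{z_1} + \nabla_{\omega_{z_1}}\omega_{z_0} = 0$. Combined with $[\omega_{z_0},\omega_{z_1}] = \nabla_{\omega_{z_0}}\omega_{z_1} - \nabla_{\omega_{z_1}}\omega_{z_0}$ (torsion-freeness of $\nabla$), this is precisely (a).

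For (b), I would substitute (a): $h([\omega_{z_0},\omega_{z_1}],\omega_{z_1}) = 2h(\nabla_{\omega_{z_0}}\omega_{z_1},\omega_{z_1}) = \omega_{z_0}\big(\|\omega_{z_1}\|^2\big) = 0$, and, writing instead $[\omega_{z_0},\omega_{z_1}] = -2\nabla_{\omega_{z_1}}\omega_{z_0}$, also $h([\omega_{z_0},\omega_{z_1}],\omega_{z_0}) = -\omega_{z_1}\big(\|\omega_{z_0}\|^2\big) = 0$. For (c), I would first note that $\omega_{z_0+z_1} = \omega_{z_0}+\omega_{z_1}$ has constant length while $\|\omega_{z_0}\|$ and $\|\omega_{z_1}\|$ are constant, so $h(\omega_{z_0},\omega_{z_1}) = \tfrac12\big(\|\omega_{z_0}+\omega_{z_1}\|^2 - \|\omega_{z_0}\|^2 - \|\omega_{z_1}\|^2\big)$ is a constant $C$ on $N$; it remains to identify $C$ with $B(z_0,z_1)$. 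Fix $\hat x \in \hat N$, put $v_1 = \hat\omega_{z_1}(\hat x)$, and let $\hat\gamma$ be the integral curve of $\hat\omega_{z_0}$ starting at $\hat x$; by the first paragraph it is the lift at $\hat x$ of the axis $\gamma_{\omega_{z_0}(\pi(\hat x))}|_{[0,1]}$, hence a geodesic with $\hat\gamma(1) = z_0(\hat x)$ and $\hat\gamma'(t) = \hat\omega_{z_0}(\hat\gamma(t))$. Since $\grad b_{v_1} = \hat\omega_{z_1}$, integrating $\frac{d}{dt}b_{v_1}(\hat\gamma(t)) = \hat h\big(\hat\omega_{z_1}(\hat\gamma(t)),\hat\omega_{z_0}(\hat\gamma(t))\big) \equiv C$ over $[0,1]$ gives $B(z_0,z_1) = b_{v_1}(z_0(\hat x)) - b_{v_1}(\hat x) = C$, as desired.

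I do not expect a serious obstacle: the content is the observation that self-adjointness of the Hessian of central Busemann functions forces $\nabla_{\omega_z}\omega_z = 0$, and that additivity of $z \mapsto \omega_z$ then turns the mixed second covariant derivative into a skew expression. The points requiring care are that $\omega_z$ is only $C^{r-1}$, so $\nabla \omega_z$ is merely continuous — harmless, since every identity used is pointwise and algebraic and $r \ge 2$ suffices — and, in (c), that one must use the identification of the axes of $z_0$ with the integral curves of $\hat\omega_{z_0}$ to join $\hat x$ to $z_0(\hat x)$ before invoking the definition of $B$.
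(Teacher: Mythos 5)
Your proof is correct and follows essentially the same route as the paper: expand $\nabla_{\omega_{z_0+z_1}}\omega_{z_0+z_1}=0$ using additivity to get (a), use constancy of the lengths for (b), and identify $h(\omega_{z_0},\omega_{z_1})$ with $B(z_0,z_1)$ by integrating along the axis of $z_0$. The only cosmetic differences are that you justify $\nabla_{\omega_z}\omega_z=0$ explicitly via symmetry of the Busemann Hessian (the paper takes it for granted from the closed-geodesic flow) and you get global constancy of $h(\omega_{z_0},\omega_{z_1})$ by polarization rather than constancy along $\gamma$, which changes nothing essential.
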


\begin{proof}
    One has that
    \begin{align*}
        0 &= \nabla_{\omega_{z_0+z_1}} \omega_{z_0+z_1} = \nabla_{\omega_{z_0}} \omega_{z_0} + \nabla_{\omega_{z_0}} \omega_{z_1} + \nabla_{\omega_{z_1}} \omega_{z_0} + \nabla_{\omega_{z_1}} \omega_{z_1}\\
        &= \nabla_{\omega_{z_0}} \omega_{z_1} + \nabla_{\omega_{z_1}} \omega_{z_0}\textrm{,}
    \end{align*}
    so $\nabla_{\omega_{z_0}} \omega_{z_1} = -\nabla_{\omega_{z_1}} \omega_{z_0}$. This proves (a). Moreover,
    \[
        0 = \omega_{z_1} [h(\omega_{z_0},\omega_{z_0})] = 2h(\nabla_{\omega_{z_1}} \omega_{z_0},\omega_{z_0}) = -2h(\nabla_{\omega_{z_0}} \omega_{z_1},\omega_{z_0}) = -2\omega_{z_0}[h(\omega_{z_1},\omega_{z_0})]\textrm{.}
    \]
    One may deduce (b) from the first three equalities above. Let $\hat{x} \in \hat{N}$, and write $\gamma = \gamma_{\hat{\omega}_{z_0}(\hat{x})}$. Since $h(\hat{\omega}_{z_1},\hat{\omega}_{z_0})$ is constant along $\gamma$, one finds that
    \begin{align*}
        B(z_0,z_1) &= b_{z_1} \big( z_0(x) \big) - b_{z_1}(x)\\
        &= \int_0^1 h \big( (\hat{\omega}_{z_0} \circ \gamma)(t), (\hat{\omega}_{z_1} \circ \gamma)(t) \big) dt\\
        &= h \big( \hat{\omega}_{z_0}(x),\hat{\omega}_{z_1}(x) \big)\textrm{,}
    \end{align*}
    which proves (c).
\end{proof}

\noindent It is now possible to prove Theorem \ref{linear splitting for subgroups}. Statements (i) and (ii) are equivalent by Corollary \ref{integrate with respect to h}(b). By Lemma \ref{constant inner products}(c), (iv) implies (iii), and it is clear that (iii) implies (ii). Therefore, the proof can be completed by showing that (i) implies (iv).

Suppose (i) holds. Write $\alpha = \omega_{\sum_i m_i z_i}$, $\beta = \sum_i m_i \omega_{z_i}$, and $\zeta = \alpha - \beta$. Then
\begin{align*}
    \| \zeta \|_\infty^2 &= G(\zeta,\zeta)\\
    &= \int_{\tor^k} h \big( \zeta, D(\zeta) \big) d \vol_g\\
    &= \int_{\tor^k} h \big( \zeta, 0 \big) d \vol_g\\
    &= 0\textrm{,}
\end{align*}
which means that
\begin{align*}
    0 &= \vol(N) H(\zeta,\zeta)\\
    &= \int_N h(\zeta,\zeta) d \vol_h\\
    &= \int_N [\|\alpha\|_h^2 + \|\beta\|_h^2 - 2h(\alpha,\beta) ] d \vol_h\textrm{.}
\end{align*}
Thus
\begin{equation}\label{cs-equality}
    \int_N h(\alpha,\beta) d \vol_h = \int_N \frac{1}{2}(\|\alpha\|_h^2 + \|\beta\|_h^2) d \vol_h\textrm{.}
\end{equation}
By the Cauchy--Schwarz inequality,
\[
    h(\alpha,\beta) \leq \|\alpha\|_h \|\beta\|_h\textrm{,}
\]
so
\[
    \int_N (\|\alpha\|_h^2 + \|\beta\|_h^2) d \vol_h \leq \int_N 2 \|\alpha\|_h^2 \|\beta\|_h^2 d \vol_h\textrm{.}
\]
At the same time, $2 \|\alpha\|_h \|\beta\|_h \leq \|\alpha\|_h^2 + \|\beta\|_h^2$. Thus
\[
    \int_N 2\|\alpha\|_h^2 \|\beta\|_h^2 d \vol_h = \int_N (\|\alpha\|_h^2 + \|\beta\|_h^2) d \vol_h\textrm{,}
\]
which implies that $\|\alpha\|_h = \|\beta\|_h$ on $N$. Substituting into equation \eqref{cs-equality}, one finds that
\[
    \int_N h(\alpha,\beta) d \vol_h = \int_N \|\alpha\|_h^2 \|\beta\|_h^2 d \vol_h
\]
and, consequently, that $h(\alpha,\beta) = \|\alpha\|_h \|\beta\|_h$ on $N$. It follows that $\alpha = \beta$, which is (iv). This proves Theorem \ref{linear splitting for subgroups}.

Theorem \ref{splitting for subgroups when integrals vanish}(a) is a consequence of the following lemma and the fact that, when (i)-(iv) hold, $G$ agrees with $H$.
\begin{lemma}\label{asymptotic norm is riemannian}
    Let $Z$ be a subgroup of $Z(\pi_1(N))$ of rank $k$. If $G$ is positive semi-definite, then
    \[
        G(\iota(z_0),\iota(z_1)) = G(\omega_{z_0},\omega_{z_1}) = \frac{1}{2}[B(z_0,z_1) + B(z_1,z_0)]
    \]
    extends to an inner product on $\real^k$ that induces the asymptotic norm $\|\cdot\|_\infty$ of $Z$ with respect to the isomorphism $D : Z \to \integer^k$.
\end{lemma}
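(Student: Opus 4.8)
The plan is to show that the symmetric bilinear form $G$ on $\mathscr{B}$, which by hypothesis is positive semi-definite, descends through $\iota$ to a genuine inner product on $\real^k$ inducing $\|\cdot\|_\infty$. First I would verify that $G(\iota(z_0),\iota(z_1)) = G(\omega_{z_0},\omega_{z_1})$: unwinding the definition of $G$ and Theorem \ref{integrate with respect to g}, one has $G(\omega_{z_0},\omega_{z_1}) = \frac12\bigl[\int_{\tor^k} \beta_{z_0}\circ D(z_1)\,d\vol_g + \int_{\tor^k}\beta_{z_1}\circ D(z_0)\,d\vol_g\bigr] = \frac12[B(z_1,z_0)+B(z_0,z_1)]$, which is exactly the claimed formula; since $\iota(D(z)) = \omega_z$ by the definitions of $\iota$ and $D$, this also gives the first equality. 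Because $G$ is bilinear on $\mathscr{B}$, the composite $G\circ(\iota\times\iota)$ is bilinear on $\integer^k$, hence extends uniquely to a symmetric bilinear form on $\real^k$; call it $\langle\cdot,\cdot\rangle$. Positive semi-definiteness of $G$ passes to $\langle\cdot,\cdot\rangle$ on $\integer^k$ and then, by continuity and homogeneity, to all of $\real^k$.

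The substantive point is that $\langle\cdot,\cdot\rangle$ is positive \emph{definite} and that the norm it induces is $\|\cdot\|_\infty$. For a nonzero $z\in Z$, $\langle D(z),D(z)\rangle = G(\omega_z,\omega_z) = B(z,z) = \|z\|_\infty^2$; so positive definiteness on the lattice $\integer^k$ is equivalent to $\|z\|_\infty > 0$ for all nontrivial $z\in Z$, which holds because $z$ has a closed-geodesic axis of positive length, so $B(z,z)^{1/2} > 0$. Since $\integer^k$ is Zariski-dense (indeed spans $\real^k$), a positive-semidefinite form that is positive on every nonzero lattice point must have trivial kernel, hence is positive definite on $\real^k$. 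To identify the induced norm with the asymptotic norm, recall from Section 2 that $\|\cdot\|_\infty$ on $Z$ is, after the identification $D$, the stable norm $\lim_{m\to\infty} d(0,mz)/m$ of the orbit metric, that it is determined by its values on $\integer^k$, and that it extends uniquely to a norm on $\real^k$. The Euclidean norm $z\mapsto\langle z,z\rangle^{1/2}$ is a norm on $\real^k$ whose restriction to $\integer^k$ agrees with $z\mapsto B(z,z)^{1/2} = \|z\|_\infty$; by the uniqueness of the extension it coincides with $\|\cdot\|_\infty$ on all of $\real^k$. Thus $\langle\cdot,\cdot\rangle$ is an inner product inducing $\|\cdot\|_\infty$, and since its matrix in the standard basis records $G(\iota(e_i),\iota(e_j))$, it is exactly the extension of the displayed formula.

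The main obstacle I anticipate is not any single hard estimate but making the passage from the lattice to $\real^k$ airtight: one must be careful that ``positive semi-definite'' for $G$ really does descend (it does, since $G(\iota(z),\iota(z))\geq 0$ for all $z\in\integer^k$ and a quadratic form nonnegative on a spanning lattice is nonnegative everywhere), and that the two a priori different extensions to $\real^k$—the unique norm extension of $\|\cdot\|_\infty$ from Section 2, and the Euclidean norm of $\langle\cdot,\cdot\rangle$—are compared on the nose on $\integer^k$ before invoking uniqueness. The only place where the no-conjugate-points hypothesis enters nontrivially is in guaranteeing $\|z\|_\infty>0$ for nontrivial $z$, i.e., that central elements have axes of positive length, which is immediate from Busemann's construction of $\omega_z$ recalled in Section 2.
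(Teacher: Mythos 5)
Your route---extend the lattice values of $G\circ(\iota\times\iota)$ bilinearly to $\real^k$ and compare directly with the asymptotic norm---is more economical than the paper's, which instead proves that the null space of $\|\cdot\|_G$ in $\mathscr{B}$ is exactly $\mathrm{Ker}(\mathscr{D})$ and pushes $G$ forward through the quotient $\mathscr{B}/\mathrm{Ker}(\mathscr{D})\cong\real^k$. But as written your argument has two genuine gaps. The first is the opening inference that ``$G$ is bilinear on $\mathscr{B}$, hence $G\circ(\iota\times\iota)$ is bilinear on $\integer^k$.'' This does not follow, because $\iota$ is not additive: $\iota(x+y)$ is the single generator $\omega_{D^{-1}(x+y)}$ of the formal vector space $\mathscr{B}$, while $\iota(x)+\iota(y)$ is a formal sum of two generators, and these coincide as vector fields only when statement (iv) holds, which is not assumed here. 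Concretely, $G(\iota(x),\iota(y))=\frac{1}{2}[B(z_x,z_y)+B(z_y,z_x)]$ for $z_x=D^{-1}(x)$, $z_y=D^{-1}(y)$, and $\integer$-bilinearity of this expression requires additivity of $B$ in its second slot---precisely the property the paper emphasizes is not known in general. Without it, your form $\langle\cdot,\cdot\rangle$, defined by its values on the pairs $(e_i,e_j)$, is not known to satisfy $\langle D(z),D(z)\rangle=B(z,z)=\|z\|_\infty^2$ at every lattice point, which is exactly what your later density argument needs. The semi-definiteness hypothesis does repair this, but it takes an argument: $\zeta=\iota(x+y)-\iota(x)-\iota(y)$ lies in $\mathrm{Ker}(\mathscr{D})$, so $G(\zeta,\zeta)=0$ by the definition of $G$, and Cauchy--Schwarz for the positive semi-definite form then gives $G(\zeta,\eta)=0$ for all $\eta$; this is in effect the easy half of the kernel identification on which the paper's quotient construction rests.

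The second gap is the claim that a positive semi-definite form on $\real^k$ that is positive at every nonzero lattice point must be positive definite. This is false: $q(x,y)=(x-\alpha y)^2$ with $\alpha$ irrational is semi-definite, positive on $\integer^2\setminus\{0\}$, and degenerate along the line spanned by $(\alpha,1)$. Fortunately you do not need it. Once the lattice identity $\langle D(z),D(z)\rangle=\|z\|_\infty^2$ is secured (see above), homogeneity extends the equality of $\langle v,v\rangle^{1/2}$ and $\|v\|_\infty$ to rational points and continuity extends it to all of $\real^k$; since $\|\cdot\|_\infty$ is a genuine norm, this equality forces definiteness. That is, the comparison with the asymptotic norm should be run first, with positive definiteness obtained as a corollary---which is essentially the density-and-continuity step that closes the paper's proof---rather than asserted beforehand from lattice positivity alone.
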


\begin{proof}
    Since $G$ is positive semi-definite, it is a semi-inner product and induces a semi-norm $\|\cdot\|_G$ on $\mathscr{B}$. If $\zeta \in \mathrm{Ker}(\mathscr{D})$, then it follows from the definition of $G$ that $\|\zeta\|_G = 0$. Conversely, suppose $\|\zeta\|_G = 0$. Since $\| \iota(\cdot) \|_G = \| \cdot \|_\infty$, there exists $c > 0$ such that $\| \iota(x) \|_G \geq c\|x\|_{\real^k}$ for all $x \in \integer^k$. Write $W = \mathrm{span}\, \{ \omega_{w_1},\ldots,\omega_{w_k} \}$. Since $\mathscr{D}|_W : W \to \real^k$ is an invertible linear map, there exists $C > 0$ such that $\|w\|_G \leq C\|\mathscr{D}(w)\|_{\real^k}$ for all $w \in W$. There exist $K_i \to \infty$ and $v_i \in W$ with $\| \mathscr{D}(v_i) \|_{\real^k} \leq 1$ such that $\mathscr{D}(K_i \zeta + v_i) \in \integer^k$. Since $\mathscr{D} \circ \iota$ is the identity map on $\integer^k$, one has that
    \[
        \mathscr{D} \big( K_i \zeta + v_i - \iota \circ \mathscr{D}(K_i \zeta + v_i) \big) = 0\textrm{,}
    \]
    and, consequently,
    \[
        \big| \| K_i \zeta + v_i \|_G - \| \iota \circ \mathscr{D}(K_i \zeta + v_i) \|_G \big| \leq \| K_i \zeta + v_i - \iota \circ \mathscr{D}(K_i \zeta + v_i) \|_G = 0\textrm{.}
    \]
    It follows that
    \[
        c \| \mathscr{D}(K_i \zeta + v_i) \|_{\real^k} \leq \| \iota \circ \mathscr{D}(K_i \zeta + v_i) \|_G = \|K_i \zeta + v_i\|_G \leq \|K_i \zeta\|_G + \|v_i\|_G \leq C\textrm{.}
    \]
    Therefore, $\| \mathscr{D}(\zeta) \|_{\real^k} \leq (1 + C/c)/K_i \to 0$ as $i \to \infty$ and, consequently, $\mathscr{D}(\zeta) = 0$. Thus $\mathrm{Ker}(\mathscr{D}) = \{ \zeta \in \mathscr{B} \st \| \zeta \|_G = 0 \}$.

    Let $\tilde{\mathscr{B}}$ be the vector space obtained as the quotient of $\mathscr{B}$ by $\mathrm{Ker}(\mathscr{D})$, $\tilde{G}$ the corresponding inner product on $\mathscr{B}$, and $\| \cdot \|_{\tilde{G}}$ the induced norm. The map $\mathscr{D}$ descends to a map $\tilde{\mathscr{D}}$ on $\tilde{\mathscr{B}}$ with trivial kernel. By construction, $\tilde{\mathscr{D}}([\omega_{w_i}]) = e_i$ for each $i$, so $\tilde{\mathscr{D}}$ is surjective. Thus $\tilde{\mathscr{D}}$ is a linear isomorphism, and $\tilde{\mathscr{D}}^{-1}(\integer^k) = \{ \sum_i m_i [\omega_{w_i}] \st m_i \in \integer \} = \{ [\omega_z] \st z \in Z \}$ projects to a dense subset of the unit sphere in $\tilde{\mathscr{B}}$. Since
    \[
        \| [\omega_z] \|_{\tilde{G}}^2 = \tilde{G}([\omega_z],[\omega_z]) = G(\omega_z,\omega_z) = B(z,z) = \|z\|_\infty^2 = \| \tilde{\mathscr{D}}([\omega_{z}]) \|_\infty^2
    \]
    for all $z \in Z$, it follows by continuity that $\tilde{\mathscr{D}} : (\tilde{\mathscr{B}},\|\cdot\|_{\tilde{G}}) \to (\real^k,\|\cdot\|_\infty)$ is an isomorphism of normed spaces. Consequently, $\tilde{\mathscr{D}}_*(\tilde{G})$ is an inner product on $\real^k$ that induces $\| \cdot \|_\infty$. On $\integer^k$, $\tilde{\mathscr{D}}_*(\tilde{G}) = G \circ \iota$.
\end{proof}

By the following lemma, when (i)-(iv) hold, the involutive $C^{r-1}$ distribution $\cap_{i=1}^k \hat{\omega}_{w_i}^\perp$ has constant dimension $n-k$, and by Frobenius's theorem it foliates $\hat{N}$ by $C^r$ $(n-k)$-dimensional submanifolds, each of which is contained in an intersection of the form $\cap_{i=1}^k H_i$ for horospheres $H_1,\ldots,H_k$ of $w_1,\ldots,w_k$, respectively.

\begin{lemma}\label{linearly independent}
    Let $Z$ be a subgroup of $Z(\pi_1(N))$ of rank $k$ for which statements (i)-(iv) of Theorem \ref{linear splitting for subgroups} hold. Then, for each $\hat{x} \in \hat{N}$, the set $\{ \hat{\omega}_{w_1}(\hat{x}),\ldots,\hat{\omega}_{w_k}(\hat{x}) \}$ is linearly independent.
\end{lemma}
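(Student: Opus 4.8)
The plan is to show that the Gram matrix of the vectors $\hat{\omega}_{w_1}(\hat{x}),\ldots,\hat{\omega}_{w_k}(\hat{x})$ with respect to $\hat{h}$ is positive definite, from which linear independence is immediate.

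First I would identify that Gram matrix. Since $\pi$ is a local isometry and each $\hat{\omega}_z$ is the lift of $\omega_z$, for $x = \pi(\hat{x})$ one has $\hat{h}\big(\hat{\omega}_{w_i}(\hat{x}),\hat{\omega}_{w_j}(\hat{x})\big) = h\big(\omega_{w_i}(x),\omega_{w_j}(x)\big)$. Because statement (iv) of Theorem \ref{linear splitting for subgroups} is assumed, Lemma \ref{constant inner products}(c) applies and gives $h\big(\omega_{w_i}(x),\omega_{w_j}(x)\big) = B(w_i,w_j)$. Hence the Gram matrix in question equals $M = [\,B(w_i,w_j)\,]_{1 \le i,j \le k}$, which in particular does not depend on $\hat{x}$.

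Next I would show that $M$ is positive definite. Since (i) holds, $\int_N F_{z_0 z_1}\,d\vol_N = \int_N F_{z_1 z_0}\,d\vol_N = 0$, so by Corollary \ref{integrate with respect to h}(a) the form $B$ is symmetric on $Z$ and $M$ is a symmetric matrix. Moreover, when (i) holds, $G$ agrees on $\mathscr{B}$ with the form $H$, which is positive semi-definite because $H(\zeta,\zeta) = \frac{1}{\vol(N)}\int_N \|\zeta\|_h^2\,d\vol_N \geq 0$; hence $G$ is positive semi-definite. Thus Lemma \ref{asymptotic norm is riemannian} applies, and $G(\iota(z_0),\iota(z_1)) = B(z_0,z_1)$ extends to an inner product on $\real^k$. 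Since $w_i = D^{-1}(e_i)$, the matrix of this inner product in the standard basis $e_1,\ldots,e_k$ is exactly $M$, so $M$ is positive definite.

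Finally, a finite set of vectors in an inner product space whose Gram matrix is positive definite is linearly independent, which completes the proof. I do not anticipate a real obstacle: the only care needed is the bookkeeping that licenses the appeal to Lemma \ref{asymptotic norm is riemannian}, i.e.\ verifying that hypotheses (i)--(iv) supply both the symmetry of $B$ on $Z$ and the positive semi-definiteness of $G$, after which the argument is pure linear algebra. One could also bypass Lemma \ref{asymptotic norm is riemannian} by checking directly that $a^{T} M a = \|D^{-1}(a)\|_\infty^2$ for every $a \in \integer^k$, extending this to $\mathbb{Q}^k$ by homogeneity of $\|\cdot\|_\infty$ and to $\real^k$ by density and continuity, and using that $\|\cdot\|_\infty$ is a genuine norm; but the cited lemma already packages precisely this computation.
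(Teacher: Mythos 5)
Your proof is correct and rests on essentially the same ingredients as the paper's: the constancy of the pointwise inner products $h(\omega_{w_i},\omega_{w_j}) = B(w_i,w_j)$ (statement (iii), via Lemma \ref{constant inner products}(c)) combined with the nondegeneracy supplied by Lemma \ref{asymptotic norm is riemannian} once $G = H$ is known to be positive semi-definite. The only difference is packaging: the paper argues contrapositively, propagating a pointwise dependence relation $\sum_i c_i \omega_{w_i}(\hat{x}) = 0$ to all of $N$ and then using the kernel fact $\|\zeta\|_G = 0 \Rightarrow \mathscr{D}(\zeta) = 0$ established inside the proof of Lemma \ref{asymptotic norm is riemannian}, whereas you invoke that lemma's statement directly to conclude that the constant Gram matrix $[B(w_i,w_j)]$ is positive definite.
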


\begin{proof}
    Suppose that, for some $c_1,\ldots,c_k \in \real$, $\sum_{i=1}^k c_i \omega_{w_i}(\hat{x}) = 0$. Then
    \begin{align*}
        0 &= \Big\| \sum_{i=1}^k c_i \omega_{w_i}(\hat{x}) \Big\|_h^2\\
        &= \sum_{i,j=1}^k c_i c_j h \big( \omega_{w_i}(\hat{x}),\omega_{w_j}(\hat{x}) \big)\\
        &= \sum_{i,j=1}^k c_i c_j h \big( \omega_{w_i}(\hat{y}),\omega_{w_j}(\hat{y}) \big)\\
        &= \Big\| \sum_{i=1}^k c_i \omega_{w_i}(\hat{y}) \Big\|_h^2
    \end{align*}
    for all $\hat{y} \in N$. Thus $\sum_{i=1}^k c_i \omega_{w_i} = 0$ on $N$. Therefore, $0 = \|\sum_{i=1}^k c_i \omega_{w_i}\|_H = \|\sum_{i=1}^k c_i \omega_{w_i}\|_G$ and, consequently, $0 = \mathscr{D}(\sum_{i=1}^k c_i \omega_{w_i}) = \sum_{i=1}^k c_i e_i$. This forces $c_i = 0$ for all $i$.
\end{proof}

\noindent The next lemma implies that each intersection $\cap_{i=1}^k H_i$ is contained in one leaf of the foliation and, as a consequence, its leaves are exactly those intersections.

\begin{lemma}\label{intersection of two horospheres is connected}
    Let $Z$ be a subgroup of $Z(\pi_1(N))$ for which statements (i)-(iv) in Theorem \ref{linear splitting for subgroups} hold. Fix $\hat{x} \in \hat{N}$, and, for each $i$, denote by $H_i$ the horosphere of $w_i$ through $\hat{x}$. Then $\hat{H} = \cap_{i=1}^k H_i$ is connected.
\end{lemma}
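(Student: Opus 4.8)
The plan is to prove the stronger statement that $\hat{N}$ is diffeomorphic to $\hat{H}\times\real^k$; connectedness of $\hat{H}$ then follows because $\hat{N}$ is connected. The key observation is that statement (iii) makes the Gram matrix $\mathcal{G}=\big(\hat{h}(\hat{\omega}_{w_i},\hat{\omega}_{w_j})\big)_{i,j}=\big(B(w_i,w_j)\big)_{i,j}$ a \emph{constant} symmetric matrix on $\hat{N}$, and by Lemma \ref{linearly independent} it is the Gram matrix of a linearly independent set at each point, hence positive definite and invertible. Consequently $\beta=(b_{w_1},\dots,b_{w_k}):\hat{N}\to\real^k$ is a $C^r$ submersion, its fibers are precisely the intersections of horospheres, and $\hat{H}=\beta^{-1}(c)$ with $c=\beta(\hat{x})$ is a closed embedded $(n-k)$-dimensional submanifold.

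Next I would introduce flow coordinates, using the $\hat{\omega}_{w_i}$ as an Ehresmann connection for $\beta$. Each $\hat{\omega}_{w_i}$ is a $C^{r-1}$ vector field of constant length $\|w_i\|_\infty$ on the complete manifold $\hat{N}$; since a bounded-length $C^1$ vector field on a complete Riemannian manifold has complete flow (its integral curves, being the asymptotic rays of $b_{w_i}$, are in any case geodesics), the flow $\phi^i_t$ of $\hat{\omega}_{w_i}$ is defined for all $t\in\real$. Because $\tfrac{d}{dt}b_{w_j}(\phi^i_t(p))=db_{w_j}(\hat{\omega}_{w_i})=B(w_j,w_i)=\mathcal{G}_{ji}$ is constant, one has $\beta\circ\phi^i_t=\beta+t\,\mathcal{G}_{\cdot i}$, the $i$-th column of $\mathcal{G}$. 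Define
\[
    \Phi:\hat{H}\times\real^k\to\hat{N},\qquad \Phi\big(\hat{y},(s_1,\dots,s_k)\big)=\phi^1_{s_1}\circ\cdots\circ\phi^k_{s_k}(\hat{y})\textrm{,}
\]
so that $\beta\circ\Phi(\hat{y},s)=c+\mathcal{G}s$. Since each $\phi^i_t$ is a diffeomorphism and $\mathcal{G}$ is invertible, $\Phi$ is a bijection: injectivity is immediate, and for surjectivity, given $\hat{z}$ with $\beta(\hat{z})=b$ one checks that $\hat{y}:=\phi^k_{-s_k}\circ\cdots\circ\phi^1_{-s_1}(\hat{z})$ with $s=\mathcal{G}^{-1}(b-c)$ lies in $\hat{H}$ and satisfies $\Phi(\hat{y},s)=\hat{z}$. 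Finally, a short computation with $d\beta$ shows that $d\Phi$ maps $T\hat{H}$ isomorphically onto $\ker d\beta$ and sends $\partial_{s_1},\dots,\partial_{s_k}$ to vectors whose $d\beta$-images are the (independent) columns of $\mathcal{G}$; hence $d\Phi$ is everywhere invertible and $\Phi$ is a diffeomorphism. Thus $\hat{N}\cong\hat{H}\times\real^k$, and $\hat{H}$ is connected. (Alternatively one could invoke Ehresmann's theorem: $\beta$ admits the complete Ehresmann connection $\langle\hat{\omega}_{w_1},\dots,\hat{\omega}_{w_k}\rangle$, hence is a locally trivial fiber bundle, and the homotopy exact sequence $\pi_1(\real^k)\to\pi_0(\hat{H})\to\pi_0(\hat{N})$ forces $\hat{H}$ connected; the explicit $\Phi$ is more self-contained and also feeds directly into Theorem \ref{splitting for subgroups when integrals vanish}(b),(c).)

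The routine part is the three verifications about $\Phi$ (well-definedness of the flows, bijectivity, invertibility of $d\Phi$), which are bookkeeping once the setup is in place. The step carrying the actual weight is recognizing that statement (iii) forces $\mathcal{G}$ to be constant: this single fact is what simultaneously makes $\beta$ a submersion with the clean translation behavior $\beta\circ\phi^i_t=\beta+t\,\mathcal{G}_{\cdot i}$, gives the fields $\hat{\omega}_{w_i}$ bounded length and hence complete flows, and makes the matrix governing the motion of $\beta$ along $\Phi$ invertible. Without the constancy of these inner products — that is, in the general no-conjugate-points setting, absent (i)--(iv) — the flows could fail to be complete and the argument would not close, so the real obstacle is in locating (i)--(iv) as exactly the hypotheses that let this run.
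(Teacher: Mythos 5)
Your proof is correct, but it takes a genuinely different route from the paper's own argument for this lemma. The paper's proof is a two-line induction: since $b_{v_1}$ has nowhere-vanishing gradient, flowing along its gradient gives a continuous retraction of the connected space $\hat{N}$ onto $H_1$; then, by Lemma \ref{constant inner products}(c), the restriction of $b_{v_{j+1}}$ to $\cap_{i=1}^j H_i$ again has nowhere-vanishing gradient, and the same retraction argument passes connectedness from $\cap_{i=1}^j H_i$ to $\cap_{i=1}^{j+1} H_i$. You instead prove the stronger statement that $\hat{N}\cong\hat{H}\times\real^k$ and read off connectedness, and your map $\Phi$ is essentially the paper's map $\Psi$ (composition of the geodesic flows of the $\hat{\omega}_{w_i}$), which the paper introduces only \emph{after} this lemma and proves to be a diffeomorphism via properness and Hadamard's global inverse function theorem --- an argument that uses connectedness of $\hat{H}$ as an input. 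Your bijectivity argument, by contrast, is independent of that: the constancy of the Gram matrix $\mathcal{G}$ (statement (iii)) gives the affine formula $\beta\circ\Phi(\hat{y},s)=c+\mathcal{G}s$, and invertibility of $\mathcal{G}$ (positive definiteness via Lemma \ref{linearly independent}) yields an explicit inverse, so there is no circularity; in fact your argument would let one obtain Theorem \ref{splitting for subgroups when integrals vanish}(b),(c) directly, bypassing the properness/Hadamard step. The trade-off is length: the paper's induction proves exactly what the lemma asserts in a few lines, while your argument front-loads the product structure. Two small corrections to your closing commentary, not affecting the proof: the fields $\hat{\omega}_{w_i}$ have constant length and complete flows for \emph{any} central element, since their integral curves are geodesics of the complete manifold $\hat{N}$ --- this does not require (iii); what (iii) is genuinely needed for is the constancy of $\mathcal{G}$, hence the translation behavior of $\beta$ under the flows and the pointwise positive definiteness. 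Also note $\Phi$ is a priori only $C^{r-1}$ (the regularity of the flows), which is all the inverse function theorem needs since $r\geq 2$.
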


\begin{proof}
    The proof is by induction. Write $v_j = \hat{\omega}_{w_j(\hat{x})}$. Since $b_{v_1}$ has nonzero gradient, the projection $\hat{N} \to \hat{H}_{z_1}$ along the integral curves of $b_{v_1}$ is a continuous surjection, which implies that $H_1$ is connected. If the result holds for $\cap_{i=1}^j H_i$, then, by Lemma \ref{constant inner products}(c), the restriction of $b_{v_{j+1}}$ to $\cap_{i=1}^j H_i$ has nonzero gradient, so $\cap_{i=1}^{j+1} H_i$ is similarly connected.
\end{proof}

\noindent Define a map $\Psi : \hat{H} \times \real^k \to \hat{N}$ in the following way: For each $(\hat{y},s_1,\ldots,s_k) \in \hat{H} \times \real^k$, let $\hat{x}_0 = \hat{y}$, and inductively define $\hat{x}_{i+1} = \gamma_{\hat{\omega}_{i+1}(\hat{x}_i)}(s_{i+1}) = \exp_{\hat{x}_i}(s_{i+1} \hat{\omega}_{i+1}(\hat{x}_i))$. Let $\Psi(\hat{y},s_1,\ldots,s_k) = \hat{x}_k$. The splittings $\hat{N} \cong H_i \times \real$ ensure that $D \Psi$ is nonsingular, and, consequently, the inverse function theorem implies that $\Psi$ is a local diffeomorphism.

\begin{lemma}
    $\Psi$ is proper.
\end{lemma}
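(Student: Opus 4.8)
The plan is to bound the two factors of $\hat H \times \real^k$ separately over a compact subset of $\hat N$, exploiting that each central Busemann function $b_{v_i}$ (with $v_i = \hat\omega_{w_i}(\hat x)$ and $H_i = b_{v_i}^{-1}(0)$, as above) depends affinely on the parameters $s_1,\ldots,s_k$ along the iterated-exponential construction. Throughout I use that statements (i)--(iv) of Theorem \ref{linear splitting for subgroups} are the standing hypothesis, so that Lemmas \ref{constant inner products} and \ref{linearly independent} apply.

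The first and essential step is the identity
\[
    b_{v_i}\big(\Psi(\hat y,s_1,\ldots,s_k)\big) = \sum_{j=1}^k s_j\, B(w_i,w_j)
\]
for every $(\hat y,s_1,\ldots,s_k) \in \hat H \times \real^k$ and every $i$. I would prove this by induction on the points $\hat x_0 = \hat y,\hat x_1,\ldots,\hat x_k$ from the definition of $\Psi$. Since $\hat\omega_{w_i} = \nabla b_{v_i}$ has constant norm, its integral curves are geodesics, and the segment from $\hat x_m$ to $\hat x_{m+1}$ is precisely the integral curve of $\hat\omega_{w_{m+1}}$ run for time $s_{m+1}$; along it, $\tfrac{d}{dt}\,b_{v_i} = \hat h(\hat\omega_{w_i},\hat\omega_{w_{m+1}})$, which by Lemma \ref{constant inner products}(c) (lifted to $\hat N$) is the constant $B(w_i,w_{m+1})$. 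Hence $b_{v_i}(\hat x_{m+1}) - b_{v_i}(\hat x_m) = s_{m+1}B(w_i,w_{m+1})$, and since $\hat y \in \hat H \subseteq H_i$ gives $b_{v_i}(\hat x_0) = 0$, the formula follows by summing. Next I would note that the matrix $M = \big(B(w_i,w_j)\big)_{i,j}$ is invertible: by Lemma \ref{constant inner products}(c) it is the Gram matrix $\big(\hat h_{\hat x}(\hat\omega_{w_i}(\hat x),\hat\omega_{w_j}(\hat x))\big)_{i,j}$, and $\hat\omega_{w_1}(\hat x),\ldots,\hat\omega_{w_k}(\hat x)$ are linearly independent by Lemma \ref{linearly independent}, so $M$ is positive definite.

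With these in hand the argument is routine. Let $K \subseteq \hat N$ be compact and choose $R$ with $|b_{v_i}| \leq R$ on $K$ for all $i$. If $(\hat y,s) \in \Psi^{-1}(K)$, the identity gives $\|Ms\|_\infty \leq R$, hence $|s| \leq \|M^{-1}\|_{\mathrm{op}}\sqrt{k}\,R =: \rho$. Moreover $\Psi(\hat y,s)$ is joined to $\hat y$ by the concatenation of the geodesic segments $\hat x_m \to \hat x_{m+1}$, whose total length is at most $\big(\max_i\|w_i\|_\infty\big)\sum_m|s_m| \leq \big(\max_i\|w_i\|_\infty\big)\sqrt{k}\,\rho =: L$; so $\hat y$ lies in $\{\,q \in \hat N : d(q,K) \leq L\,\}$, which is compact because $\hat N$ is complete. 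Since $\hat H = \bigcap_i b_{v_i}^{-1}(0)$ is closed in $\hat N$, all such $\hat y$ lie in a compact subset $K'$ of $\hat H$. Hence $\Psi^{-1}(K) \subseteq K' \times \{\,s \in \real^k : |s| \leq \rho\,\}$, and being a closed subset (as $\Psi$ is continuous) of this compact set, $\Psi^{-1}(K)$ is compact.

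The only real content lies in the affine identity of the second paragraph, which is the crux and rests entirely on the constancy of $\hat h(\hat\omega_{w_i},\hat\omega_{w_j})$ from Lemma \ref{constant inner products}(c); everything after it is bookkeeping with the triangle inequality and Hopf--Rinow. The one point deserving a word of care is the identification of the segments produced by $\Psi$ with integral curves of the $\hat\omega_{w_i}$ --- equivalently, that the integral curve of $\hat\omega_{w_i}$ through $\hat z$ is $t \mapsto \exp_{\hat z}(t\hat\omega_{w_i}(\hat z))$ --- which holds because $\hat\omega_{w_i}$ is the gradient of the $C^r$ function $b_{v_i}$ and has constant norm, so that $\nabla_{\hat\omega_{w_i}}\hat\omega_{w_i} = \tfrac12\nabla\|\hat\omega_{w_i}\|^2 = 0$.
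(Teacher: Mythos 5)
Your proof is correct, and its overall shape matches the paper's: control the $\real^k$-coordinates of $\Psi^{-1}(K)$ through the values of the Busemann functions $b_{v_i}$, using positive definiteness of the matrix $[B(w_i,w_j)]$, and then control the $\hat{H}$-coordinate by a distance estimate, concluding via completeness of $\hat{N}$. The difference is in the key step. The paper argues qualitatively: it deduces from Corollary \ref{integrate with respect to h} that $[B(w_i,w_j)]$ is positive definite, shows that $\sum_i |b_{v_i} \circ \Psi(\hat{y},m_1,\ldots,m_k)| \to \infty$ uniformly over lattice points $(m_i) \in \integer^k$, and upgrades to real parameters via a uniform Lipschitz constant coming from periodicity. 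You instead prove the exact affine identity $b_{v_i}\big(\Psi(\hat{y},s)\big) = b_{v_i}(\hat{y}) + \sum_j s_j B(w_i,w_j)$, integrating $\hat{h}(\hat{\omega}_{w_i},\hat{\omega}_{w_{m+1}})$ along each leg and using its constancy from Lemma \ref{constant inner products}(c), which is available since statements (i)--(iv) are the standing hypothesis; and you get positive definiteness of $M = [B(w_i,w_j)]$ from its pointwise Gram-matrix interpretation together with Lemma \ref{linearly independent} rather than from Corollary \ref{integrate with respect to h}. Your identity subsumes both of the paper's steps (no lattice/Lipschitz interpolation needed) and makes the $\real^k$-bound quantitative, so it is a sharper, self-contained variant of the same argument; the remaining bookkeeping (length bound $L$ on the broken geodesic, closedness of $\hat{H}$, closed subset of a compact product) is sound, including your justification that the segments produced by $\Psi$ are integral curves of the $\hat{\omega}_{w_i}$ because these are constant-norm gradient fields.
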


\begin{proof}
    It follows from Corollary \ref{integrate with respect to h} that $[B(w_i,w_j)]$ is a positive-definite symmetric matrix, so $\sum_{i=1}^k |b_i \circ \Psi(\hat{y},m_1,\ldots,m_k)| \to \infty$ uniformly as $\sum_{i=1}^k |m_i| \to \infty$ for $m_i \in \integer$. By periodicity, there exists a Lipschitz constant, uniform in $\hat{y} \in \hat{N}$, for all maps of the form $\Psi(\hat{y},\cdot)$. Thus $\sum_{i=1}^k |b_i \circ \Psi(\hat{y},s_1,\ldots,s_k)| \to \infty$ uniformly as $\sum_{i=1}^k |s_i| \to \infty$. Let $X \subset \hat{N}$ be compact. Then $\sum_{i=1}^k |b_i|$ is bounded on $X$, which implies that the projection of $\Psi^{-1}(X)$ onto the $\real^k$-factor is compact. At the same time, the projection of $\Psi^{-1}(X)$ onto the $\hat{H}$-factor is contained within a closed ball around $X$, so it too is compact. Thus $\Psi^{-1}(X)$ is compact.
\end{proof}

\noindent Hadamard's global inverse function theorem implies that $\Psi$ is a diffeomorphism, which proves parts (c) and, in turn, (b) of Theorem \ref{splitting for subgroups when integrals vanish}.

For a fixed $\hat{x} \in \hat{N}$, let $\hat{H}$ be the intersection of horospheres $\cap_{i=1}^k H_i$ containing $\hat{x}$. Following the argument in \cite{O'Sullivan1974}, set $G_0 = \{ g \in \pi_1(N) \st g(\hat{x}) \in \hat{H} \}$. By Lemma \ref{constant change in busemann functions}, $G_0$ is normal, contains the commutator subgroup $[\pi_1(N),\pi_1(N)]$, and acts freely and properly discontinuously on $\hat{H}$ by isometries. Note that the subgroup $G_0'$ of $G_0$ consisting of orientation-preserving elements has all of those same properties, and that the quotient space $N_0 = \hat{H} / G_0'$ is orientable. The subgroup $G$ generated by $G_0'$ and $Z$ is isomorphic to $G_0' \times Z$, and the quotient space $\hat{H} / G$ is diffeomorphic to $N_0 \times \tor^k$. One obtains normal covering maps
\[
    \hat{H} \times \real^k \xrightarrow{\psi_0} N_0 \times \tor^k \xrightarrow{\phi_0} N\textrm{.}
\]
Note that $\psi_0$ may be assumed a product and that $G_0' = \pi_1(N_0)$. Let $\Gamma$ denote the quotient group $\pi_1(N) / G$. By construction,
\[
    0 \to \pi_1(N_0) \times Z \to \pi_1(N) \to \Gamma \to 0
\]
is a short exact sequence, which implies that
\[
    0 \to (\pi_1(N_0)/[\pi_1(N),\pi_1(N)]) \times Z \to H_1(N,\integer) \to \Gamma \to 0
\]
is as well. This proves Theorem \ref{splitting for subgroups when integrals vanish}(d).

\section{Proof of Theorem \ref{convex splitting for subgroups}}

A straightforward volume argument shows that, whenever a central Busemann function is everywhere subharmonic or everywhere superharmonic, it must be harmonic.

\begin{lemma}\label{central busemann functions are harmonic}
    Let $N$ be a compact Riemannian manifold with no conjugate points and $b_v$ a central Busemann function on $\hat{N}$. If $b_v$ is either sub- or superharmonic, then it is harmonic.
\end{lemma}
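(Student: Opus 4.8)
The plan is to reduce the claim to the vanishing of $\int_N \Delta b_v \, d\vol_N$ and then obtain that from Green's identity applied over a horospherical fundamental domain. If $v = 0$ then $b_v \equiv 0$ is harmonic, so assume $v = \hat\omega_z(p)$ for a nontrivial $z \in Z(\pi_1(N))$. Since the metric and $b_v$ are $C^r$ with $r \geq 2$, the function $\Delta b_v$ is continuous on $\hat N$. By Lemma \ref{constant change in busemann functions} the deck transformations change $b_v$ only by additive constants, which $\Delta$ annihilates, and $\Delta$ commutes with the isometric deck action; hence $\Delta b_v$ is $\pi_1(N)$-invariant and descends to a continuous function on the compact manifold $N$. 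By hypothesis this descended function has constant sign, so it suffices to show that its integral over $N$ vanishes: a continuous function of one sign that integrates to zero over a compact manifold is identically zero, and $\Delta b_v \equiv 0$ is precisely harmonicity of $b_v$.

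To evaluate the integral I would first invoke Theorem \ref{virtual splitting} to pass to a finite normal cover $\tilde N$ of $N$ in which $z$, viewed in $\pi_1(\tilde N)$, is primitive (and still central); the Busemann function of $z$ is unchanged, $\Delta b_v$ descends to $\tilde N$, and $\int_{\tilde N} \Delta b_v \, d\vol_{\tilde N}$ is a positive multiple of $\int_N \Delta b_v \, d\vol_N$, so one may assume $z$ is primitive. Fix a horosphere $H$ of $z$ and choose open sets $U_1,\dots,U_K \subset H$ with $V = \bigcup_i U_i \times [0,\|z\|_\infty)$ a fundamental domain of $\pi$ as in Lemma \ref{fundamental domain}, the $\real$-factor recording arclength along the unit normal geodesics of $H$. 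Applying Green's identity with $\phi \equiv 1$ and $\psi = b_v$ on each slab $U_i \times [0,\|z\|_\infty]$, exactly as in the proof of Lemma \ref{derivative of area} — the lateral boundary $\partial U_i \times [0,\|z\|_\infty]$ contributes nothing because its outward normal is orthogonal to $\nabla b_v = \hat\omega_z$ — one gets
\[
    \int_{U_i \times [0,\|z\|_\infty]} \Delta b_v \, d\vol_{\hat N} = \|z\|_\infty \big[ \vol_{H_{\|z\|_\infty}}\!\big(U_i \times \{\|z\|_\infty\}\big) - \vol_{H_0}\!\big(U_i \times \{0\}\big) \big]\textrm{.}
\]
If the $U_i$ lack $C^r$ boundary, one exhausts each by sets with $C^r$ boundary and convergent volumes, as at the end of the proof of Theorem \ref{green's identity integral}.

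The key geometric point is then that flowing a point of $H$ along $\hat\omega_z$ for arclength $\|z\|_\infty$ is the time-one flow of $\hat\omega_z$, which coincides with the deck transformation $z$ because $\gamma_{\hat\omega_z(\hat y)}|_{[0,1]}$ is the lift of a closed geodesic representing $z$; hence $U_i \times \{\|z\|_\infty\} = z(U_i)$, and since $z$ is an isometry carrying the horosphere $H_0$ onto $H_{\|z\|_\infty}$ the two volumes on the right coincide. So each slab integral is zero, and summing over $i$ yields $\int_{\tilde N} \Delta b_v \, d\vol_{\tilde N} = \int_V \Delta b_v \, d\vol_{\hat N} = 0$, completing the argument.

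I expect the only real friction to be bookkeeping rather than substance: keeping straight the two parametrizations of the $\real$-direction (arclength versus the value of $b_v$, differing by the factor $\|z\|_\infty$), checking that the time-one flow of $\hat\omega_z$ is the deck transformation $z$ under the chosen sign conventions, and the routine regularity point about $\partial U_i$ — each of which is already handled in substance by Lemmas \ref{derivative of area} and \ref{fundamental domain} and the proof of Theorem \ref{green's identity integral}.
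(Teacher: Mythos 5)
Your argument is correct, but it is organized differently from the paper's. The paper never descends to the quotient or integrates over all of $N$: it applies Lemma \ref{derivative of area} directly on $\hat{N}$ to an arbitrary open piece $U$ of an arbitrary horosphere, observes that a fixed sign of $\Delta b_v$ makes $t \mapsto \vol_{H_{T+t}}(U \times \{T+t\})$ monotone, while invariance under the deck transformation $z$ makes it $\|z\|_\infty$-periodic, hence constant; since $U$ and $T$ are arbitrary, $\Delta b_v \equiv 0$ pointwise. This is purely local and needs neither primitivity of $z$, nor Theorem \ref{virtual splitting}, nor Lemma \ref{fundamental domain}. Your route instead pushes $\Delta b_v$ down to a compact quotient and kills its total integral by Green's identity over a horospherical fundamental domain, which is closer in spirit to the paper's proof of Theorem \ref{green's identity integral}; it works, but it drags in the finite-cover reduction and the fundamental-domain machinery only to establish that one integral vanishes. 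Note also that your reduction step can be shortened to one line: since $\nabla b_v = \hat{\omega}_z$ is the lift of the globally defined $C^{r-1}$ field $\omega_z$ on the closed manifold $N$, the descended function is $\mathrm{div}\,\omega_z$, whose integral over $N$ vanishes by the divergence theorem, so no fundamental domain (and no passage to a cover where $z$ is primitive) is needed. The bookkeeping points you flag (arclength versus $b_v$-value parametrization, $z$ versus $z^{-1}$ as the time-one map) are indeed harmless, since all you use is that some isometric deck transformation carries the bottom slice onto the top slice.
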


\begin{proof}
    If $b_v$ vanishes identically, the result is clear. Suppose $b_v$ corresponds to a nontrivial element of $Z(\pi_1(N))$. Let $T$, $U$, and, for each $t \in \real$, $H_t$ be as in Lemma \ref{derivative of area}. Then
    \[
        \frac{d}{dt} \vol_{H_{T+t}}(U \times \{ T + t \}) = \frac{1}{\|z\|_\infty} \int_{U \times \{ T + t \}} \Delta b_v \, d\vol_{H_{T+t}}\textrm{.}
    \]
    Since the right-hand side is either nonnegative or nonpositive, $\vol_{H_{T+t}}(U \times \{ T + t \})$ is $\|z\|_\infty$-periodic, and $U$ and $T$ are arbitrary, $\Delta b_v$ must vanish identically.
\end{proof}

\noindent If $b_v$ is a convex or concave central Busemann function, then it is subharmonic or, respectively, superharmonic. By Lemma \ref{central busemann functions are harmonic}, it is therefore harmonic. Since every convex or concave harmonic function has vanishing Hessian, one obtains the following.

\begin{lemma}\label{central busemann functions are totally geodesic}
    Let $b_v$ be a central Busemann function on $\hat{N}$. If $b_v$ is convex or concave, then it is totally geodesic.
\end{lemma}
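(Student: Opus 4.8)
The plan is to combine geodesic convexity (or concavity) with harmonicity by means of elementary linear algebra applied to the Hessian $\nabla^2 b_v$. First I would recall that, because the metric is $C^r$ for $r \geq 2$, the central Busemann function $b_v$ is $C^2$, so $\nabla^2 b_v$ is a well-defined, continuous, symmetric bilinear form on each tangent space of $\hat{N}$. For any geodesic $\gamma$ one has $(b_v \circ \gamma)'' = \nabla^2 b_v(\gamma',\gamma')$; hence $b_v$ is convex precisely when $\nabla^2 b_v$ is everywhere positive semi-definite, and concave precisely when it is everywhere negative semi-definite.

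Next I would note that, since $\Delta b_v = \trace \nabla^2 b_v$, a convex $b_v$ is subharmonic and a concave $b_v$ is superharmonic. Lemma \ref{central busemann functions are harmonic} then forces $b_v$ to be harmonic, so $\trace \nabla^2 b_v \equiv 0$. At each point of $\hat{N}$, $\nabla^2 b_v$ is thus a symmetric form that is semi-definite and has vanishing trace: its eigenvalues all have the same sign yet sum to zero, so they all vanish, and therefore so does the form. Hence $\nabla^2 b_v \equiv 0$ on $\hat{N}$.

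Finally, with the Hessian vanishing identically, $(b_v \circ \gamma)'' = \nabla^2 b_v(\gamma',\gamma') = 0$ for every geodesic $\gamma$ of $\hat{N}$, so $b_v \circ \gamma$ is affine in the parameter; equivalently, $b_v$ carries each geodesic of $\hat{N}$ to a constant-speed geodesic of $\real$, which is the asserted total geodesy. I do not anticipate a genuine obstacle: the one point requiring care is that the eigenvalue argument must be applied pointwise to $\nabla^2 b_v$, which is legitimate precisely because central Busemann functions---unlike arbitrary Busemann functions---are known to be $C^2$, so that the Hessian exists everywhere and the convexity or concavity hypothesis translates into a pointwise sign condition on it.
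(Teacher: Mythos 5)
Your argument is correct and is essentially the paper's own proof: convexity (concavity) gives sub- (super-)harmonicity, Lemma \ref{central busemann functions are harmonic} upgrades this to harmonicity, and a semi-definite Hessian with vanishing trace must vanish, so $b_v$ maps geodesics affinely to $\real$. The only difference is that you spell out the pointwise eigenvalue argument and the $C^2$ regularity of central Busemann functions, which the paper leaves implicit.
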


\noindent For the remainder of this section, the hypotheses of Theorem \ref{convex splitting for subgroups} will be assumed. In this case, the conclusions of Theorem \ref{linear splitting for subgroups} hold. As before, let $w_1,\ldots,w_k$ generate $Z$.

Fix $\hat{x} \in N$, and, for each $i$, let $v_i = \hat{\omega}_{w_i}(\hat{x})$. Since $b_{v_i}$ is totally geodesic, each of its horospheres is a totally geodesic submanifold of $\hat{N}$. This and the fact that $\nabla_{\hat{\omega}_{w_i}} \hat{\omega}_{w_i} = 0$ imply that $\hat{\omega}_{w_i}$ is parallel. Thus the $k$-dimensional distribution $\mathscr{S} = \mathrm{span}\,\{ \hat{\omega}_{w_1}, \ldots, \hat{\omega}_{w_1} \}$ is involutive and, by Frobenius's theorem, foliates $\hat{N}$ by $k$-dimensional submanifolds. Because $\mathscr{S}$ restricts on each leaf of this foliation to a globally parallel orthonormal frame, its leaves are flat and totally geodesic Euclidean spaces. It follows from de Rham's splitting theorem \cite{deRham1952} that $\hat{N}$ is isometric to $\hat{H} \times \real^k$ for any intersection of horospheres $\hat{H}$ as in Theorem \ref{linear splitting for subgroups}. Note that $Z$ acts on each $\real^k$-fiber by translations, which completes the proof of Theorem \ref{convex splitting for subgroups}(a). The proof of part (b) follows exactly as in \cite{O'Sullivan1974}: The distribution $\mathscr{D}$ projects to a parallel distribution on $N$, the leaves of which are compact, flat, totally geodesic, and without holonomy; consequently, they must be toruses.

Unlike in the case of nonpositive curvature, it is not known that a nontrivial subgroup of the fundamental group of a compact manifold with no conjugate points has center consisting of its Clifford translations (cf. Proposition 2.3 of \cite{ChenEberlein1980} and Lemma 3 of \cite{Eberlein1982}). However, Theorem \ref{convex splitting for subgroups}(c) may still be proved using Theorem \ref{virtual splitting} and the argument in Remark 1 of \cite{Eberlein1982}. Corollary \ref{virtual splitting for higher rank} implies that $N$ is finitely covered by a manifold $\tilde{N}$ with $\pi_1(\tilde{N}) \cong G' \times \integer^k$ for a subgroup $G'$ of $\pi_1(N)$, where $w_1^{m_1},\ldots,w_k^{m_k}$ generate the $\integer^k$-factor. Without loss of generality, one may replace $G'$ with its orientation-preserving elements. By Theorem \ref{convex splitting for subgroups}(a) and Lemma \ref{constant change in busemann functions}, elements of $\pi_1(N)$ preserve the horizontal and vertical foliations of $\hat{H} \times \real^k$. Thus $G'$ acts on $\hat{H}$. If this action were not free and properly discontinuous, one could construct, by an argument similar to the proof of Lemma \ref{primitive elements}, elements of $\pi_1(\tilde{N})$ of arbitrarily small displacement, contradicting the compactness of $N$. Thus the quotient space $N_1 = \hat{H} / G'$ is an orientable manifold.

An elementary argument shows that $\tilde{N}$ has the structure of a $\tor^k$-bundle, with totally geodesic and flat $\tor^k$-fibers, over $N_1$. The proof of Lemma 4 of \cite{Eberlein1982}, except with $C^r$ regularity, passes through exactly as written. Thus one may construct a $C^r$ section of the $\tor^k$-bundle as in Remark 1 of \cite{Eberlein1982}, which then implies that $\tilde{N}$ is diffeomorphic to $N_1 \times \tor^k$ and that the covering map $N_1 \times \tor^k \to N$ restricts on each $\tor^k$-fiber to a totally geodesic and locally isometric immersion onto a leaf of the $\tor^k$-foliation of $N$. This proves part (c).

\section{Additional results and questions}

This work was largely motivated by the question of whether the center theorem of Wolf \cite{Wolf1964} and O'Sullivan \cite{O'Sullivan1974} generalizes to the case of no conjugate points.

\begin{question}\label{question1}
    Let $N$ be a compact Riemannian manifold with no conjugate points with $Z(\pi_1(N))$ of rank $k$.\\
    (a) Is $N$ foliated by totally geodesic and flat $k$-toruses?\\
    (b) Does the universal covering space $\hat{N}$ split isometrically as $\hat{H} \times \real^k$?
\end{question}

\noindent As a starting point, one might try to show that the asymptotic norm on $Z(\pi_1(N))$ is Riemannian.

It is natural to look for geometric conditions, weaker than having no focal points, that ensure the linear splitting of central Busemann functions in Theorem \ref{linear splitting for subgroups}. The following argument shows that it suffices to control the asymptotic geometry of distance spheres on $\hat{N}$: Fix a nontrivial $z \in Z(\pi_1(N))$. The $C^{r-1}$ regularity of $\omega_z$ guarantees that the stable Jacobi tensor along each $\gamma_{\hat{\omega}_z(\hat{x})}$ is bounded. The argument in Proposition 5 of \cite{Eschenburg1977} shows that, at each point $\hat{x} \in \hat{N}$, the second fundamental forms of the distance spheres $\partial B(\gamma_v{z}(-t),t)$, for $v = \hat{\omega}_z(\hat{x})/\|\hat{\omega}_z(\hat{x})\|$, converge to that of the horosphere of $z$ through $\hat{x}$.

For any unit vector $v \in T_{\hat{x}} \hat{N}$, define the distance function $\rho_{\hat{x}}(\cdot) = d_{\hat{N}}(\cdot,\hat{x})$ to be \textbf{asymptotically subharmonic in the direction of $v$} (respectively, \textbf{superharmonic}) if $\liminf_{t \to \infty} \Delta \rho_{\hat{x}}(\gamma_v(t)) \geq 0$ (respectively, $\limsup_{t \to \infty} \Delta \rho_{\hat{x}}(\gamma_v(t)) \leq 0$). On $\hat{N} \setminus \{ \hat{x} \}$, let $\kappa_{\hat{x}}^-$ and $\kappa_{\hat{x}}^+$ be the functions equal to the smallest and, respectively, largest eigenvalues of $\mathrm{Hess}\,\rho_{\hat{x}}$, and similarly define $\rho_{\hat{x}}$ to be \textbf{asymptotically convex in the direction of $v$} (respectively, \textbf{concave}) if $\liminf_{t \to \infty} \kappa_{\hat{x}}^-(\gamma_v(t)) \geq 0$ (respectively, $\limsup_{t \to \infty} \kappa_{\hat{x}}^+(\gamma_v(t)) \leq 0$). Lemmas \ref{central busemann functions are harmonic} and \ref{central busemann functions are totally geodesic} imply the following.

\begin{proposition}
    Let $z \in Z(\pi_1(N))$. Then the following hold:\\
    (a) If, for all $\hat{x} \in \hat{N}$, $\rho_{\hat{x}}$ is asymptotically subharmonic in the direction of $v = \hat{\omega}_z(\hat{x})$, then, for each such $v$, $b_v$ is harmonic;\\
    (b) If, for all $\hat{x} \in \hat{N}$, $\rho_{\hat{x}}$ is asymptotically convex in the direction of $v = \hat{\omega}_z(\hat{x})$, then, for each such $v$, $b_v$ is totally geodesic.
\end{proposition}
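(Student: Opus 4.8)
The plan is, in each case, to show that a central Busemann function of $z$ is subharmonic (case (a)) resp.\ convex (case (b)), and then to invoke Lemma~\ref{central busemann functions are harmonic} resp.\ Lemma~\ref{central busemann functions are totally geodesic}. By the facts recalled in Section~2, the central Busemann functions of a fixed $z$ all have gradient field $\hat{\omega}_z$, so they differ pairwise by constants, and each is $C^r$ with $r\geq 2$; let $b$ be $\|z\|_\infty^{-1}$ times one of them, so that $\nabla b=\hat{\omega}_z/\|z\|_\infty$ is a unit geodesic field. Subharmonicity, convexity, and being totally geodesic are unaffected by this rescaling and by additive constants, so it suffices to treat this $b$. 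Because $\nabla b$ is geodesic and of unit length, $\mathrm{Hess}\,b$ annihilates $\nabla b$, so at each $\hat{x}\in\hat{N}$ it has eigenvalue $0$ in the direction $\nabla b$ and restricts on $(\nabla b)^{\perp}$ to the second fundamental form $\mathrm{II}(H_{\hat{x}})$ of the horosphere $H_{\hat{x}}$ of $z$ through $\hat{x}$, computed with respect to the normal $\nabla b$. In particular $\Delta b(\hat{x})$ is the mean curvature of $H_{\hat{x}}$, and $\mathrm{Hess}\,b(\hat{x})\geq 0$ precisely when $\mathrm{II}(H_{\hat{x}})\geq 0$. The goal is therefore to prove that $\Delta b\geq 0$ everywhere in case (a), and that $\mathrm{II}(H_{\hat{x}})\geq 0$ for every $\hat{x}$ in case (b).

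Fix $\hat{x}_0\in\hat{N}$, put $v_0=\hat{\omega}_z(\hat{x}_0)/\|z\|_\infty$, and let $c=\gamma_{v_0}$, so $c(0)=\hat{x}_0$. Since each $\exp_p$ is a diffeomorphism, geodesics in $\hat{N}$ are globally minimizing, so $d(c(s),c(s'))=|s-s'|$; and since $\gamma_{\hat{\omega}_z(\hat{x}_0)}|_{[0,1]}$ represents $z$, equivariance of $\hat{\omega}_z$ gives $c(s+\|z\|_\infty)=z(c(s))$ for all $s$. For $m\geq 1$ set $t_m=m\|z\|_\infty$. The deck transformation $z^m$ is an isometry of $\hat{N}$ with $z^m(c(-t_m))=\hat{x}_0$ and $z^m(\hat{x}_0)=c(t_m)$, so it carries the triple $\bigl(\text{centre }c(-t_m),\ \text{radius }t_m,\ \text{point }\hat{x}_0\bigr)$ to $\bigl(\text{centre }\hat{x}_0,\ \text{radius }t_m,\ \text{point }c(t_m)\bigr)$, and in particular carries the outward unit normal $v_0$ of $\partial B(c(-t_m),t_m)$ at $\hat{x}_0$ to the outward unit normal of $\partial B(\hat{x}_0,t_m)$ at $c(t_m)$. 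Hence the second fundamental form of $\partial B(c(-t_m),t_m)$ at $\hat{x}_0$ has the same spectrum as that of $\partial B(\hat{x}_0,t_m)$ at $c(t_m)$; equivalently, since the Laplacian commutes with isometries, $\Delta\rho_{c(-t_m)}(\hat{x}_0)=\Delta\rho_{\hat{x}_0}(c(t_m))$ and $\kappa^{-}_{c(-t_m)}(\hat{x}_0)=\kappa^{-}_{\hat{x}_0}(c(t_m))$.

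Now I would apply the convergence established in the paragraph before the Proposition: the second fundamental forms at $\hat{x}_0$ of the spheres $\partial B(\gamma_{v_0}(-t),t)=\partial B(c(-t),t)$ — all of which have tangent space $v_0^{\perp}$ at $\hat{x}_0$ — converge, as $t\to\infty$, to $\mathrm{II}(H_{\hat{x}_0})$. Specializing to $t=t_m\to\infty$ and passing to traces resp.\ to smallest eigenvalues, the previous paragraph yields $\Delta\rho_{\hat{x}_0}(c(t_m))\to\Delta b(\hat{x}_0)$ and $\kappa^{-}_{\hat{x}_0}(c(t_m))\to\min\{0,\lambda^{-}\}$, where $\lambda^{-}$ is the smallest eigenvalue of $\mathrm{II}(H_{\hat{x}_0})$. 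In case (a), since $t_m\to\infty$, asymptotic subharmonicity of $\rho_{\hat{x}_0}$ in the direction $v_0$ gives $0\leq\liminf_{t\to\infty}\Delta\rho_{\hat{x}_0}(c(t))\leq\lim_m\Delta\rho_{\hat{x}_0}(c(t_m))=\Delta b(\hat{x}_0)$; as $\hat{x}_0$ is arbitrary, $b$ is subharmonic, hence harmonic by Lemma~\ref{central busemann functions are harmonic}. In case (b), $\kappa^{-}_{\hat{x}_0}\leq 0$ everywhere (since $0$ is always an eigenvalue of $\mathrm{Hess}\,\rho_{\hat{x}_0}$), so asymptotic convexity forces $\kappa^{-}_{\hat{x}_0}(c(t_m))\to 0$, hence $\lambda^{-}\geq 0$, i.e.\ $\mathrm{Hess}\,b(\hat{x}_0)\geq 0$; as $\hat{x}_0$ is arbitrary, $b$ is convex, hence totally geodesic by Lemma~\ref{central busemann functions are totally geodesic}.

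The point I expect to be the crux — and the reason no uniform rate of convergence of second fundamental forms is needed — is the observation that the asymptotic hypothesis need only be tested along the arithmetic progression $t_m=m\|z\|_\infty$: for exactly these radii, the deck transformation $z^m$ conjugates a sphere centred behind $\hat{x}_0$ on the axis and marked at $\hat{x}_0$ into a sphere centred at $\hat{x}_0$ and marked at the advanced point $c(t_m)=z^m(\hat{x}_0)$, which converts the cited pointwise convergence (distance spheres through a fixed point collapsing onto the horosphere there) into the convergence of $\Delta\rho_{\hat{x}_0}$ and $\kappa^{-}_{\hat{x}_0}$ along $c$ that the hypothesis constrains. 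The remaining ingredients — the spectral dictionary relating $\mathrm{Hess}\,b$ to $\mathrm{II}(H_{\hat{x}})$, the matching of normal directions, and the two cited lemmas — are routine.
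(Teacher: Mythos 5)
Your proposal is correct and follows the route the paper intends: the paper deduces the proposition directly from the stated convergence of the second fundamental forms of the distance spheres $\partial B(\gamma_v(-t),t)$ to that of the horosphere, together with Lemmas \ref{central busemann functions are harmonic} and \ref{central busemann functions are totally geodesic}. Your deck-transformation step (using $z^m$ to identify $\Delta\rho_{c(-t_m)}(\hat{x}_0)$ and $\kappa^-_{c(-t_m)}(\hat{x}_0)$ with $\Delta\rho_{\hat{x}_0}(c(t_m))$ and $\kappa^-_{\hat{x}_0}(c(t_m))$ at the periods $t_m=m\|z\|_\infty$) simply makes explicit the translation, left implicit in the paper, between the asymptotic hypothesis on $\rho_{\hat{x}}$ along the forward geodesic and the pointwise convergence of sphere data at $\hat{x}$.
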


\noindent Similar results hold for asymptotically superharmonic or asymptotically concave central Busemann functions.

\begin{question}\label{question2}
    Are there natural geometric conditions, other than having no focal points, that ensure central Busemann functions are asymptotically subharmonic or asymptotically convex?
\end{question}

\noindent Natural conditions that have been considered are Ricci curvature bounds, the effects and limitations of which are discussed thoroughly in \cite{EschenburgO'Sullivan1980}.

It's also unclear whether the Heber foliation of the unit sphere bundle of a torus with no conjugate points \cite{Heber1994} generalizes to the case where Busemann functions in $Z$ split linearly.

\begin{question}\label{question3}
    Let $N$ be a compact Riemannian manifold with no conjugate points and $Z$ a subgroup of $Z(\pi_1(N))$ such that statements (i)-(iv) in Theorem \ref{linear splitting for subgroups} hold. Write $\mathscr{W} = \{ \sum_{i=1}^k a_i \hat{\omega}_{w_i} \st a_i \in \real \}$, and, for any $\hat{x} \in \hat{N}$, write $\mathscr{V}_{\hat{x}} = \{ \nabla b_v \st v = \sum_{i=1}^k a_i \hat{\omega}_{w_i}(\hat{x}) \textrm{ for } a_i \in \real \}$. Is $\mathscr{V}_{\hat{x}} = \mathscr{W}$?
\end{question}

\noindent An elementary argument shows that, for each nonzero $a = (a_1,\ldots,a_k) \in \real^k$, there is a unique $C^r$ horofunction $h_a$ such that any sequence of rational directions that converge to $a$ induces a sequence of Busemann functions that vanish at $\hat{x}$ and converge uniformly on compact sets to $h_a$. Moreover, the gradient flow of $h_a$ is through geodesics, and its level sets are the integral submanifolds of the codimension-one involutive distribution $(\sum_{i=1}^k a_i \hat{\omega}_{w_i})^\perp$. However, without the linear divergence of geodesics in \cite{Heber1994}, it's not clear that, when $a$ is irrational, $h_a = b_v$ for $v = \sum_{i=1}^k a_i \hat{\omega}_{w_i}(\hat{x})$.

\bibliography{bibliography}
\bibliographystyle{amsplain}

\end{document}